\documentclass[10pt, conference, letterpaper, twocolumn]{ieeeconf}
\IEEEoverridecommandlockouts
\usepackage{cite}
\usepackage{amsmath,amssymb, amsfonts}
\usepackage{graphicx}
\usepackage{textcomp}
\let\labelindent\relax
\usepackage{enumitem}
\usepackage{xcolor}
\def\BibTeX{{\rm B\kern-.05em{\sc i\kern-.025em b}\kern-.08em
    T\kern-.1667em\lower.7ex\hbox{E}\kern-.125emX}}

\usepackage{amsthm}

\usepackage{comment}
\usepackage{thmtools} 
\usepackage{thm-restate}
\usepackage{orcidlink}

\newtheorem{theorem}{Theorem}

\newtheorem{remark}{Remark}
\newtheorem*{remark*}{Remark}
\newtheorem{lemma}{Lemma}
\newtheorem*{lemma*}{Lemma}
\newtheorem{assumption}{Assumption}

\DeclareMathOperator*{\argmin}{arg\,min}
\DeclareMathOperator*{\Diag}{Diag}

\DeclareMathOperator*{\interior}{int}
\DeclareMathOperator*{\Sym}{Sym}
\DeclareMathOperator*{\dist}{dist}
\DeclareMathOperator*{\conv}{conv}

\newcommand{\X}{\mathcal{X}}
\newcommand{\R}{\mathbb{R}}
\newcommand{\D}{\operatorname{D}\!}

\renewcommand{\P}{\mathcal{P}}
\newcommand{\I}{\mathbb{I}}
\newcommand{\LDPhi}{L_{2}}
\newcommand{\LDG}{L_{1}}
\newcommand{\Lg}{L_{3}}

\newcommand{\gv}[1]{\textcolor{cyan}{#1}}

\author{
Georgios Vasileiou\orcidlink{0009-0002-3679-0510}$^{\dagger}$,
Lantian Zhang\orcidlink{0000-0002-1814-5596}$^{\dagger}$,
and
Silun Zhang\orcidlink{0000-0003-3772-761X}$^{\dagger}$%
\thanks{This work has been partially supported by the Wallenberg AI, Autonomous Systems and Software Program (WASP), funded by the Knut and Alice Wallenberg Foundation.}%
\thanks{$^{\dagger}$ Georgios Vasileiou ,
Lantian Zhang
and Silun Zhang are with the Dep. of Mathematics at the KTH Royal Institute of Technology, Stockholm, 10044, Sweden. {\tt\small\{geovas, 
lantian,
silunz\}@kth.se}}%
}%

\begin{document}
\title{\LARGE \bf
Incentive Design without Hypergradients: A Social-Gradient Method}
\maketitle
\thispagestyle{empty}
\pagestyle{empty}
\bstctlcite{IEEE:BSTurloff} 

\begin{abstract}

Incentive design problems consider a system planner who steers self-interested agents toward a socially optimal Nash equilibrium by issuing incentives in the presence of information asymmetry, that is, uncertainty about the agents' cost functions.
A common approach formulates the problem as a Mathematical Program with Equilibrium Constraints (MPEC) and optimizes incentives using hypergradients—the total derivatives of the planner's objective with respect to incentives.
However, computing or approximating the hypergradients typically requires full or partial knowledge of equilibrium sensitivities to incentives,
which is generally unavailable under information asymmetry. In this paper, we propose a hypergradient-free incentive law, called the \emph{social-gradient flow}, for incentive design when the planner's social cost depends on the agents’ joint actions.
We prove that the social cost gradient is always a descent direction for the planner's objective,
irrespective of the agent cost landscape.
In the idealized setting where equilibrium responses are observable, the social-gradient flow converges to 
the unique socially-optimal incentive.
When equilibria are not directly observable, the social-gradient flow emerges as the slow-timescale limit of a two-timescale interaction, in which agents' strategies evolve on a faster timescale. 
It is established that the joint strategy-incentive dynamics converge to the social optimum for any agent learning rule that asymptotically tracks
the equilibrium. Theoretical results are also validated via numerical experiments.
\end{abstract}


\section{Introduction}

Incentive design addresses the problem of a system planner who seeks to steer the collective behavior of self-interested agents toward a socially desirable outcome by publicly committing to a reward (penalty) scheme.
This hierarchical decision-making structure is often encountered in a broad class of
engineered systems, including demand response in energy markets~\cite{liDistributedOnlinePricing, fochesatoStackelbergGameIncentivebased2022},
congestion-aware tolling~\cite{povedaDistributedAdaptivePricing,
barreraDynamicIncentives}, and data crowdsourcing
markets~\cite{hoAdaptiveContractDesignCrowdsourcing}. 
The planner's incentive map can
be viewed as a feedback mechanism between contracted parties~\cite{hoControltheoreticViewIncentives1980}, with the aim of aligning agents' 
preferences with a target equilibrium~\cite{ratliffPerspectiveIncentiveDesign2019}.

A central difficulty in incentive design is the \emph{information asymmetry},
where the leader typically lacks complete information about the agents’ cost functions. Therefore, incentive design can be regarded as a reverse (or inverse) Stackelberg game with partial or no information on agents' costs~\cite{hoInformationStructure1981, ratliffPerspectiveIncentiveDesign2019, velichetiHarnessingInformationIncentive2025}.

As a standard formulation, Stackelberg games can be cast into Mathematical Programs with Equilibrium Constraints (MPECs)~\cite{luoMathematicalProgramsEquilibrium1996, colsonOverviewBilevelOptimization2007},  in which the planner solves a minimization of social cost subject to the constraint that agents respond with a Nash equilibrium (NE).
A prevalent approach to solving MPECs treats the constraint as a differentiable layer and updates the planner's action by \emph{hypergradient descent}\cite{frieszSensitivityAnalysisBased1990, liEndtoendLearningIntervention2020, kimMPECMethodsBilevel2020, hauswirthTimescaleSeparationAutonomous2021a}. The hypergradient is the total derivative of the social objective with respect to incentives, through the agents' equilibrium response.

Gradient-based approaches to solving MPEC for incentive design have been studied from both continuous- and discrete-time perspectives.
In continuous-time, recent work has utilized full- or partial- information hypergradient-decent to solve Stackelberg games,
under varying low-level cost structures and dynamics~\cite{mojica-navaStackelbergPopulationLearning2022,alpcanNashEquilibriumDesign2009,alpcanControlTheoreticApproach2009}. 
Population games are considered in~\cite{mojica-navaStackelbergPopulationLearning2022}, with agents that follow replicator dynamics over a probability simplex, and a slow-timescale gradient flow that utilizes the potential structure of the lower level is proven to guarantee convergence.
\cite{ alpcanControlTheoreticApproach2009}
examines game design under both full and partial information assumptions. In the partial information case, the unavailable hypergradient is
reconstructed from equilibrium trajectories, utilizing local utility gradients at the current
equilibrium. Likewise, 
in discrete time, Liu et al.~\cite{liuInducingEquilibriaSimulatneous} develop
a two-timescale scheme where agents evolve their strategies via mirror descent and the planner's knowledge of local costs is leveraged to asymptotically estimate the unavailable hypergradient. A related relaxation is proposed in~\cite{grontasBIGHypeBest2024}, where an approximate NE and its sensitivity are computed in a distributed manner and subsequently utilized for a projected hypergradient step. 
In addition, current work in~\cite{maheshwariAdaptiveIncentiveDesign2024} presents an incentive design method that does not rely on hypergradients. Instead, it requires observing the externality, defined as the marginal difference between the social objective and the agents' costs.

In the present work, we propose the \textit{social-gradient flow}, a hypergradient-free incentive law applicable whenever the planner’s social cost is a function of the agents’ collective action profile.
The proposed scheme utilizes only the gradient of social cost (without differentiating through the equilibrium). We establish that the social cost gradient constitutes a descent direction for the planner’s objective, regardless of the unknown agent cost landscape.
Under an idealized equilibrium observation model in which the planner can observe the equilibrium responses, we prove global convergence of the social-gradient flow to the unique socially optimal incentive from any feasible initial condition. 
In the relaxed setting where equilibrium responses are unobservable, the proposed law represents the slow-timescale limit of a two-timescale learning dynamics in which agents adapt their strategies on a fast timescale and the planner responds to the observed trajectory on the slow one. Under standard assumptions on agents’ learning process (i.e., that it asymptotically tracks Nash equilibria), we prove the joint two-timescale dynamics converge to the social optimum.

\textbf{Notation.}
For a real-valued map $f:\R^n \to \R$, denote by $\nabla f(x)$ its gradient at $x$. We say that $f$ is $\mu$-strongly convex on $\X\subset \R^n$ if $
f(y)\ge f(x)+\langle \nabla f(x),\, y-x\rangle+\frac{\mu}{2}\|y-x\|^2,$
for all $x,y\in\X$.
For a vector-valued map $F:\R^n \to \R^m$, denote by $\D F(x) \in \R^{m\times n}$ its Jacobian at $x$. 
For a matrix $A$, define $\Sym(A) = \frac{1}{2}(A + A^\top)$ the symmetrization of $A$.
Denote by $f(t) = O(g(t))$ the existence of $c> 0$ and $t_0$ so that $|f(t)|\leq c|g(t)|$ for all $t\geq t_0$.
Write $f(t) = o(g(t))$ if $ \lim_{t\to \infty}f(t) /g(t) = 0$. In addition, $C^k$ denotes the class of $k$-order continuously differentiable functions.
\section{Problem Formulation}
\label{sec:problem_formulation}

We consider a set of $\mathcal{N} = \{1, \ldots, n\}$ non-cooperative agents that interact in a game influenced by a single system planner. Each agent $i\in \mathcal{N}$ seeks to minimize its own cost by choosing a strategy $x_i \in \mathcal X_i$, with $\mathcal{X}_i\subset \mathbb R$ a compact, convex set.~\footnote{We present the scalar case for notational simplicity; the results extend to higher-dimensional strategy spaces.} Let $\X = \prod_{i \in \mathcal{N}}\X_i$ be the joint strategy space.
Agent $i$ is equipped with a private nominal cost $\ell_i:\X\to\R$, where $\ell_i$ is $C^2$,
and also receives an incentive $\gamma_i:\X \to \R$ issued by the system planner, acting as a penalty on action $x\in \X$ if $\gamma_i(x) \geq 0$ and as a reward otherwise. 
The literature has extensively investigated affine linear incentives~\cite{basarAffineIncentiveSchemes1984, zhengExistenceDerivationOptimal1982a, maheshwariAdaptiveIncentiveDesign2024, liSociallyOptimalEnergy2024},
due to their sufficiency to influence agent behavior by modifying marginal costs.
For this reason, we consider the total cost of each agent to be
\begin{equation}\label{eq:indiv_cost}
 c_i^\gamma(x) = \ell_i(x) + \gamma_i (x) =\ell_i(x) + p_ix_i, \quad x\in\mathcal \X, 
\end{equation}
where $ p_i \in \mathbb R$ is an incentive parameter.
We refer to $p = (p_i)_{i\in \mathcal{N}} \in \R^n$ as the incentive issued to agents.
Given the costs $(c_i^\gamma)_{i\in \mathcal{N}}$, define the pseudo-gradient of the incentivized and nominal games, respectively, as
\[G_\gamma(x) = \begin{bmatrix}
  \frac{\partial}{\partial x_1}c_1^\gamma \\
  \vdots \\
 \frac{\partial}{\partial x_n}c_n^\gamma \\
\end{bmatrix}, \text{ and }
G_0 (x) = \begin{bmatrix}
  \frac{\partial}{\partial x_1}\ell_1 \\
  \vdots \\
 \frac{\partial}{\partial x_n}\ell_n \\
\end{bmatrix}.  \]
Observe that by the above definition and~\eqref{eq:indiv_cost}, $p$ acts as an additive shift to the pseudo-gradient with $G_\gamma(x) = G_0(x) + p$.
Following the equivalence between NE and variational inequality (VI) problems
(see, e.g., ~\cite{facchineiFiniteDimensionalVariationalInequalities2004, facchineiNashEquilibriaVariational2009}), 
for any given incentive $p$ and costs $(c_i^\gamma)_{i\in \mathcal{N}}$,
we define an \emph{incentivized Nash equilibrium} (INE) of the game to be any $x^*(p)\in\X$ satisfying
\begin{equation}\label{eq:NE_condition}
G_\gamma(x^*(p))^\top (x-x^*(p))\ge 0,\qquad \forall x\in\X.
\end{equation}

\begin{assumption}
\label{ass:strong_monotonicity}
    $\D G_0$ is $\LDG$-Lipschitz in $\X$, and there exists a positive scalar $m>0$,
    such that 
    $G_0$ satisfies
    \begin{equation}
    \begin{aligned}
        & \Sym(\D G_0(x)) \succeq \frac{m}{2} \I, \, \forall x \in \X. \\
    \end{aligned} 
    \end{equation}
\end{assumption}

The strong monotonicity assumption on $G_0$ 
is sufficient for the existence and uniqueness of the INE for all $p\in \R^n$~\cite[Theorem~2.3.3]{facchineiFiniteDimensionalVariationalInequalities2004}.
Assumption~\ref{ass:strong_monotonicity} can be relaxed to Rosen's \emph{diagonal strict convexity (concavity)}  condition~\cite{rosenExistenceUniquenessEquilibrium1965a},
though doing so would 
require an additional diagonal matrix known in the proposed incentive-seeking dynamics.
We call any map $p \mapsto x^*(p)$ that satisfies~\eqref{eq:NE_condition} a \emph{response map} for the incentivized game.
Define the set $\P = -G_0(\interior \X) \subset \R^n$. 
The result that follows characterizes a response map.

\begin{lemma}
  \label{lemma:NE_existence}
  Suppose Assumption \ref{ass:strong_monotonicity} holds. 
  Over the open set $\P$, the response map $x^*(\cdot)$
  is unique, and $x^* : \P \to \interior \X$ is a $C^1$ diffeomorphism.
\end{lemma}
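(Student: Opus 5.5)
The plan is to show that on $\P$ the response map is exactly the inverse of $-G_0$ restricted to $\interior\X$, and then obtain smoothness from the inverse function theorem.

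\emph{Step 1: equilibria are interior and solve $G_0(x)=-p$.} Fix $p\in\P$. By Assumption~\ref{ass:strong_monotonicity} and the cited existence/uniqueness result \cite[Theorem~2.3.3]{facchineiFiniteDimensionalVariationalInequalities2004}, the VI \eqref{eq:NE_condition} has a unique solution $x^*(p)$. Since $p\in\P$, there is some $\bar x\in\interior\X$ with $G_0(\bar x)=-p$, so $G_\gamma(\bar x)=G_0(\bar x)+p=0$. Then \eqref{eq:NE_condition} holds trivially at $\bar x$, and uniqueness gives $x^*(p)=\bar x\in\interior\X$. Hence on $\P$ the response map is single-valued, takes values in $\interior\X$, and satisfies $G_0(x^*(p))=-p$. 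This settles uniqueness.

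\emph{Step 2: $-G_0$ is a bijection $\interior\X\to\P$.} Surjectivity holds by the definition of $\P$. For injectivity, strong monotonicity suffices. Because $\X$ is convex, the mean value theorem along segments gives
\[
(G_0(x)-G_0(y))^\top(x-y)=\int_0^1 (x-y)^\top \D G_0(y+t(x-y))(x-y)\,dt\ge \tfrac m2\|x-y\|^2 .
\]
So $G_0(x)=G_0(y)$ forces $x=y$. Consequently $x^*=(-G_0|_{\interior\X})^{-1}$ on $\P$.

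\emph{Step 3: openness of $\P$ and $C^1$ regularity.} Since $\ell_i\in C^2$, the map $G_0$ is $C^1$. For every $x$ and $v\neq 0$ we have $v^\top \D G_0(x)v=v^\top\Sym(\D G_0(x))v\ge \tfrac m2\|v\|^2>0$, so $\D G_0(x)$ is nonsingular. By the inverse function theorem, $-G_0$ is a local $C^1$ diffeomorphism at each point of the open set $\interior\X$. In particular it is an open map, so $\P=-G_0(\interior\X)$ is open. Combined with global injectivity from Step 2, the local $C^1$ inverses patch together into the global inverse $x^*$. Therefore $x^*:\P\to\interior\X$ is a $C^1$ bijection with $C^1$ inverse $-G_0$, that is, a $C^1$ diffeomorphism, with $\D x^*(p)=-\D G_0(x^*(p))^{-1}$.

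The main subtlety is Step 1: it needs equilibria in $\P$ to be interior so that the VI reduces to an equation. The point $\bar x$ supplies exactly this, since an equation solution in $\interior\X$ is automatically a VI solution, and VI uniqueness then rules out any boundary equilibrium. The other potential gap is that differentiability at boundary points is not needed, since everything is taken on $\interior\X$.
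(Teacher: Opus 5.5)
Your proof is correct and follows essentially the same route as the paper. Both use the segment integral of $\D G_0$ to get strong monotonicity, which gives VI uniqueness and injectivity of $G_0$, and both use nonsingularity of $\D G_0$ with the inverse function theorem to get openness of $\P$ and the $C^1$ inverse $x^*(p)=G_0^{-1}(-p)$. Your Step~1, which shows that the interior zero $\bar x$ of $G_\gamma$ is automatically the unique VI solution, makes explicit an identification the paper only asserts when it writes $x^*(p)=G_0^{-1}(-p)$ (and later relies on in Lemma~\ref{lemma:NE_properties}(i)), so it is a worthwhile addition.
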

\begin{proof}
Presented in Appendix \ref{app:proofs}.
\end{proof}

$\P$ is bounded and simply connected (hence path connected), since $x^*: \P \to \interior \X$ is a diffeomorphism and $\interior \X$ 
is an open hyperrectangle in $\mathbb R^n$. However, $\P$ need not be convex. 
The following lemma presents properties of the map $x^*(\cdot)$.
\begin{lemma}
\label{lemma:NE_properties}
  Under Assumption \ref{ass:strong_monotonicity}, 
   it holds that, for any $p \in \P$,
  \begin{enumerate}
    \item[(i)] the response $x^*(p)$ satisfies \(G_\gamma(x^*(p))= 0, \) and is $ \frac{2}{m}$-Lipschitz ;
    \item[(ii)] the response map Jacobian $\D x^*(p)$ is non-singular and satisfies $\Sym(\D x^*(p)) \prec 0$; 
    \item[(iii)]  there exist positive constants $\sigma_1$ and $\sigma_2$ such that
    \[\sigma_1 \|w\|_2 \leq \|\D x^*(p) w \|_2 \leq \sigma_2 \|w\|_2, \, \forall w \in \R^n,\]
    where $\sigma_1 \!=\! \left( \max_{x \in \X} \!\|\D G_0(x) \|_2 \right)^{-1} $ and $\sigma_2 = \frac{2}{m}$; and
  \item[(iv)] the Jacobian $\D x^*(p)$ is $\frac{8 \LDG}{m^3}$-Lipschitz in $\P$.
  \end{enumerate}
\end{lemma}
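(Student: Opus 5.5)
The plan is to work entirely from the interior characterization supplied by Lemma~\ref{lemma:NE_existence}: for $p\in\P$ the response $x^*(p)$ lies in $\interior\X$, so the VI~\eqref{eq:NE_condition} forces $G_\gamma(x^*(p))=0$. Equivalently,
\[
G_0(x^*(p)) = -p ,
\]
which means $x^*=(-G_0)^{-1}$ on $\P$. This identity gives the first half of (i) directly.

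For the Lipschitz claim in (i), I would take $p,q\in\P$ and use strong monotonicity of $G_0$. By the integral form of the mean value theorem along the segment in the convex set $\X$, Assumption~\ref{ass:strong_monotonicity} gives
\[
(G_0(x)-G_0(y))^\top(x-y)\ge \tfrac{m}{2}\|x-y\|^2 .
\]
Applying this with $x=x^*(p)$ and $y=x^*(q)$, together with $G_0(x^*(p))-G_0(x^*(q))=q-p$ and Cauchy--Schwarz, yields $\|x^*(p)-x^*(q)\|\le \frac{2}{m}\|p-q\|$. This argument does not require $\P$ to be convex, since all the integration takes place in $\X$.

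For (ii), I would differentiate the identity $G_0(x^*(p))=-p$, which is legitimate because $x^*$ is $C^1$ by Lemma~\ref{lemma:NE_existence}. This gives $\D x^*(p)=-\D G_0(x^*(p))^{-1}$. Write $A=\D G_0(x^*(p))$. Since $\Sym(A)\succeq\frac m2\I$, any $v$ with $Av=0$ satisfies $v^\top A v=0$, so $v=0$; hence $A$ is nonsingular. To obtain negative definiteness, set $w$ arbitrary nonzero and $v=A^{-1}w$. Then
\[
w^\top A^{-1}w = v^\top A^\top v = v^\top A v \ge \tfrac m2\|v\|^2>0,
\]
so $\Sym(\D x^*(p))\prec 0$.

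For (iii), the upper bound comes from the same chain of inequalities. We have $\frac m2\|v\|^2\le v^\top A v\le \|v\|\,\|Av\|$, so $\|A^{-1}w\|\le\frac2m\|w\|$. For the lower bound, $\|w\|=\|A A^{-1}w\|\le \|A\|\,\|A^{-1}w\|\le \max_{x\in\X}\|\D G_0(x)\|\,\|A^{-1}w\|$. The maximum exists because $\D G_0$ is continuous on the compact set $\X$, and it is positive by the monotonicity bound.

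Part (iv) is the step requiring the most care, mainly in tracking constants. I would use the resolvent identity
\[
A^{-1}-B^{-1}=A^{-1}(B-A)B^{-1},
\]
with $A=\D G_0(x^*(p))$ and $B=\D G_0(x^*(q))$. From (iii), $\|A^{-1}\|,\|B^{-1}\|\le\frac2m$. The Lipschitz property of $\D G_0$ gives $\|B-A\|\le \LDG\|x^*(p)-x^*(q)\|$, and part (i) bounds the latter by $\LDG\cdot\frac2m\|p-q\|$. Multiplying these factors gives the constant $\frac{2}{m}\cdot\frac{2}{m}\cdot \LDG\cdot\frac{2}{m}=\frac{8\LDG}{m^3}$, as claimed.
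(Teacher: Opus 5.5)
Your proposal is correct and follows essentially the same route as the paper. Interiority gives $G_\gamma(x^*(p))=0$, and strong monotonicity on $\X$ gives the $\frac{2}{m}$-Lipschitz bound without needing convexity of $\P$. The identity $\D x^*(p)=-\D G_0(x^*(p))^{-1}$ with the substitution $v=\D G_0(x^*)^{-1}w$ gives (ii)--(iii), and the resolvent identity $A^{-1}-B^{-1}=A^{-1}(B-A)B^{-1}$ yields the constant $\frac{8\LDG}{m^3}$ in (iv).
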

\begin{proof}
Presented in Appendix \ref{app:proofs}.
\end{proof}

The system planner's objective is to steer the agents toward the minimizer
of a strongly convex function $\Phi:\X \to \R$ by selecting an appropriate incentive $p$.
Hence, the planner attempts to solve the MPEC problem
\begin{equation}
  \label{eq:planner_problem}
  \begin{aligned}
    \min_{p \in \R^n} &\Phi(x^*) \\
    \text{s.t. } &  G_\gamma(x^*)^\top \left(x - x^*\right) \geq 0, \, \forall x \in \X.
  \end{aligned}
\end{equation}

\begin{assumption}
\label{ass:social_cost}
The social cost function $\Phi:\X \to \R$ is $\mu_\Phi$-strongly convex in $\X$ and has a $\LDPhi$-Lipschitz gradient.
Further, the unique social optimum $x^\dagger = \argmin_{x \in \X}\Phi(x)$ is in $\interior \X$.
\end{assumption}

Denote by $p^\dagger = -G_0(x^\dagger) \in \P$ the unique incentive satisfying $x^*(p^\dagger) = x^\dagger$, i.e., the incentive under which the agents' INE is aligned with the social optimum.
When restricted to the domain $\P$, 
the bilevel program~\eqref{eq:planner_problem} then reduces to an implicit minimization of
$\Phi_*:\P \to \R$, where $\Phi_*(p) = \Phi(x^*(p))$ is the objective
and its (hyper)gradient with respect to $p$ is
\begin{equation*}
  \nabla\Phi_*(p) = \D x^*(p)^\top \nabla \Phi(x)\big|_{x = x^*(p)}.
  \label{eq:hypergradient}
\end{equation*}

The key challenge in solving~\eqref{eq:planner_problem} is that 
both the domain $\P$ and the response map $x^*$ depend on the private cost functions of the agents. Moreover, 
the hypergradient $\nabla \Phi_*(p)$ is not computable due to its dependence on the INE sensitivity $\D x^*(p)$.
In the sections that follow, we design an incentive adaptation law that asymptotically solves the planner's minimization without requiring agents' private information.

\section{Social-Gradient Incentive Adaptation}
\label{sec:gradient_flow}
This section considers a continuous-time incentive-seeking dynamics, which we refer to as the \textit{social-gradient flow}. 
Under an equilibrium observation model \cite{ratliffAdaptiveIncentiveDesign2021, liSociallyOptimalEnergy2024}
that assumes that the induced equilibrium response $x^*(p)$ is directly observable to the planner,
we prove that these dynamics asymptotically converge to the incentive $p^\dagger$ that induces the socially optimal response.

We define the \textit{social-gradient flow} as
\begin{equation}
\label{eq:gradient_flow}
\dot p(t) = g_*(p(t))= \nabla \Phi(x)\big|_{x = x^*(p(t))}, \quad p(0) \in \P.
\end{equation}
The social-gradient flow~\eqref{eq:gradient_flow} 
utilizes the negative definiteness of $\Sym(\D x^*(p))$, to ensure that $\nabla \Phi(x^*(p))$ is a descent direction for $\Phi_*$ at $p$. 
For $c \geq 0$, define the sublevel sets
\begin{equation}
  \label{eq:sublevel_sets}
\P_c = \{p \in \P: \Phi(x^*(p)) - \Phi(x^\dagger) \leq c \}.
\end{equation}
Let
\(
c^* = \min_{x \in \partial \X} \Phi(x) - \Phi(x^\dagger) > 0.
\) 
We show that each $\P_c$, with $c<c^*$, is forward invariant under~\eqref{eq:gradient_flow},
and constitutes a domain of attraction for the optimal incentive $p^\dagger$.

\begin{theorem}
\label{thm:gradient_flow_convergence}
Suppose Assumptions \ref{ass:strong_monotonicity} and \ref{ass:social_cost} hold.
For any $c < c^*$,
the set $\P_c$ is compact and forward invariant under dynamics \eqref{eq:gradient_flow}.
Moreover, $\P_c$ contains a unique asymptotically stable equilibrium at $p^\dagger $.
In particular, for any $p(0)\in \P_c$, the trajectory of dynamics~\eqref{eq:gradient_flow} satisfies $\lim_{t\to \infty}p(t) =p^\dagger$. 
\end{theorem}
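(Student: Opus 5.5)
The plan is a Lyapunov argument with $V(p) = \Phi(x^*(p)) - \Phi(x^\dagger) = \Phi_*(p) - \Phi_*(p^\dagger)$ on $\P$.

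\textbf{Compactness of $\P_c$.} Since $x^*:\P\to\interior\X$ is a diffeomorphism (Lemma~\ref{lemma:NE_existence}), we have $\P_c = x^*(\cdot)^{-1}(S_c)$, where $S_c=\{x\in\interior\X:\Phi(x)-\Phi(x^\dagger)\le c\}$. Because $c<c^*$ and $\Phi$ is continuous on the compact set $\X$, the set $\{x\in\X:\Phi(x)-\Phi(x^\dagger)\le c\}$ is closed. It also misses $\partial\X$, since every boundary point has excess cost at least $c^*>c$. Hence $S_c$ is a compact subset of $\interior\X$. Its image under the continuous inverse map $x\mapsto -G_0(x)$ is compact, and this image is exactly $\P_c$.

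\textbf{Decrease of $V$.} Along~\eqref{eq:gradient_flow},
\[
\dot V = \nabla\Phi_*(p)^\top \dot p = \nabla\Phi(x^*(p))^\top \D x^*(p)\, \nabla\Phi(x^*(p)) = g_*^\top \Sym(\D x^*(p))\, g_* \le 0,
\]
by Lemma~\ref{lemma:NE_properties}(ii). Equality holds iff $\nabla\Phi(x^*(p))=0$, i.e.\ iff $x^*(p)=x^\dagger$, i.e.\ iff $p=p^\dagger$. For this I use that $\Phi$ is strongly convex with interior minimizer, so $x^\dagger$ is the unique critical point in $\interior\X$, together with the injectivity of $x^*$.

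\textbf{Forward invariance.} The vector field $g_*$ is locally Lipschitz on the open set $\P$, because $\nabla\Phi$ is Lipschitz and $x^*$ is Lipschitz by Lemma~\ref{lemma:NE_properties}(i). So solutions exist and are unique locally. While a solution stays in $\P$, $V$ is nonincreasing, so it remains in $\P_c$. Since $\P_c$ is a compact subset of the open set $\P$, the solution cannot reach $\partial\P$ in finite time or escape. Hence solutions exist for all $t\ge0$ and stay in $\P_c$.

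This is the step I expect to require care. $\P$ is open and possibly nonconvex, and $V$ is defined only on $\P$. The argument must therefore go through the standard extension theorem: a maximal solution that stays in a compact subset of the domain has maximal interval $[0,\infty)$. It should not assume a globally defined field.

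\textbf{Convergence.} Apply LaSalle's invariance principle on the compact, forward invariant set $\P_c$. Since $\dot V\le0$ and the set $\{\dot V=0\}$ equals $\{p^\dagger\}$, every trajectory converges to $p^\dagger$. Asymptotic stability follows from the Lyapunov argument. $V$ is positive definite about $p^\dagger$ because $x^*$ is injective and $x^\dagger$ is the unique minimizer. $V$ is strictly decreasing away from $p^\dagger$. The sublevel sets $\P_{c'}$ for small $c'$ form a neighborhood basis of $p^\dagger$, since they are preimages of small neighborhoods of $x^\dagger$ under a homeomorphism. Uniqueness of the equilibrium in $\P_c$ is exactly the characterization of the zero set of $g_*$.
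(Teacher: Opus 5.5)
Your proposal is correct and follows the paper's proof. It uses the same Lyapunov function $V(p)=\Phi_*(p)-\Phi(x^\dagger)$ and the same computation $\dot V = g_*^\top \Sym(\D x^*(p))\, g_* \le 0$ from Lemma~\ref{lemma:NE_properties}(ii), with equality only at $p^\dagger$. Your compactness step, which pushes the compact set $S_c\subset\interior\X$ forward under the continuous map $x\mapsto -G_0(x)$, is a slightly cleaner variant of the paper's sequential closure argument; your explicit forward-completeness and LaSalle reasoning supplies details the paper leaves implicit.
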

\begin{proof}
  First, we show that for any $c < c^*$, the sublevel set $\P_c$ is compact.
  Observe that $\P_c\subset \bar \P = -G_0(\X)$ is bounded. Consider $(p_k)_k \subset \P_c$ that converges to $p_\infty \in \bar \P $.
  If $p_\infty \in \partial \bar \P$, 
  then $\lim_{k \to \infty} \left(\Phi(x^*(p_k)) - \Phi(x^\dagger) \right) \geq c^*.$
  However, since $\Phi(x^*(p_k)) - \Phi(x^\dagger) \leq c < c^*$ for all $k$, we have a contradiction.
  Therefore $p_\infty \in \P$, and by the continuity of $\Phi\circ x^*$ on $\P$, it holds that 
  \(\Phi(x^*(p_\infty))  = \lim_{k \to \infty} \Phi(x^*(p_k))\leq c + \Phi(x^\dagger) ,\)
  which implies $p_\infty \in \P_c$.
  
 Second, we show that $\P_c$ is forward invariant and contains a unique asymptotically stable equilibrium at $p^\dagger$.
  Define the Lyapunov function $V(p) = \Phi_*(p) - \Phi(x^\dagger)$
  with the sublevel set \(\P_c = \{p\in \P: V(p) \leq c\}.\)
  
   By Assumption \ref{ass:social_cost}, $V(p) > 0 $ for all $p \in \P_c \setminus\{p^\dagger\}$.
  Moreover, $V$ is non-increasing along all trajectories of \eqref{eq:gradient_flow}, since 
  \begin{equation}
  \label{eq:thm1_invariance}
    \begin{aligned}
    \dot{V}(p)& = \nabla V(p)^\top g_*(p) \\
                & = \nabla \Phi(x^*)^\top \!\Sym (\D x^*(p)) \nabla \Phi(x^*) \overset{(a)}{\leq} 0,
    \end{aligned}
  \end{equation}
  where $x^* = x^*(p)$, 
  and $(a)$ follows from Lemma~\ref{lemma:NE_properties}-(ii). The equality in~\eqref{eq:thm1_invariance} holds only for $p=p^\dagger$, and the result follows.
\end{proof}

 The implementation of~\eqref{eq:gradient_flow} requires exact observations of equilibrium response $x^*(p)$, which requires that agents instantly compute and play the INE.
A more practical observation model, a learning-agent model, assumes that
 the planner observes an evolving trajectory of agent responses 
 given $p_k$, which asymptotically converges to \(x^*(p_k)\) on a faster timescale. In this case, law~\eqref{eq:gradient_flow} emerges as the limiting dynamics of an associated two-timescale iteration.

\section{Incentive Adaptation under Equilibrium Tracking Dynamics}
\label{sec:two_timescale}

In this section, we generalize the equilibrium observation requirement and model the planner-agent interaction as a two-timescale learning process. Formally, in response to the incentive $p_k$ issued by the planner, the agents update their strategy $x_k$ according to a private learning dynamics, which is only required to guarantee asymptotic convergence to the equilibrium response for any fixed incentive. Meanwhile, the planner observes the trajectory of $x_k$ and updates the incentive on a slower timescale using the social-gradient flow.
Given an initial strategy-incentive pair  $(x_0,p_0)$, 
the overall strategy-incentive update dynamics are given by
\begin{subequations}
\label{eq:iteration}
\begin{align}
  x_{k+1} & = x_k + a_k \left( f(x_k, p_k) - x_k \right), \label{eq:iterations_x}\\
  p_{k+1} & = p_k + \beta_k \nabla \Phi(x_k) \mathbf 1 \big\{p_k + \beta_k \nabla \Phi(x_k) \in  \P_{ c}\big\}, \label{eq:iterations_p}
  \end{align}
\end{subequations}
where $a_k \in (0,1]$ and $\beta_k > 0$. 
In~\eqref{eq:iteration}, $f(\cdot, p_k)$ represents the agents' (unknown) learning dynamics for the Nash equilibrium $x^*(p_k)$ and $\P_{ c}$ is a known compact sublevel set of $\P$ with $0 < c < c^*$. 
Since the set $P_c$ is generally nonconvex, the projection operation onto $P_c$ is intractable. We therefore adopt an indicator-based update in~\eqref{eq:iterations_p}.

Our objective is to show that, even when the Nash response map $x^{*}(p_k)$ is unobservable, the incentive $p_k$ designed using the agents’ learning-dynamics responses $x_k$, as given in \eqref{eq:iterations_x}-\eqref{eq:iterations_p}, guarantees global convergence of both the players’ and the planner’s strategies to the socially optimal pair.

\begin{assumption}
  \label{ass:fast_dynamics}
  The map $f:\X\times \P_{ c} \to \X$ is jointly $L_f$-Lipschitz.
Moreover, there exists a class-$\mathcal{KL}$ function $\phi$,
such that every solution $x(t)$ of dynamics $\dot x = f(x,p)-x$ satisfies
\[\|x(t) - x^*(p)\|_2 \leq \phi \left( \|x(0)-x^*(p)\|_2, t\right), \]
for any $t \geq 0$, $x(0) \in \X$, and $p \in \P_c$.
\end{assumption}

Assumption~\ref{ass:fast_dynamics} holds under a broad class of learning dynamics $f$ studied in the literature, 
such as the Nash equilibrium (NE) update~\cite{ratliffAdaptiveIncentiveDesign2021},
the projected gradient (PG) update~\cite{tatarenkoGeometricConvergenceGradient2021}, and
the best-response (BR) update~\cite{facchineiFiniteDimensionalVariationalInequalities2004, scutariConvexOptimizationGame2010}.
We revisit these learning rules in Section~\ref{sec:numerical}.

\begin{assumption}
  \label{ass:timescale}
  The sequences $(a_k)_k $ and $(\beta_k)_k$ satisfy 
  \[\sum_{k=0}^\infty a_k = \sum_{k=0}^\infty\beta_k = \infty, \, \sum_{t=0}^\infty(a_k^2 +\beta_k^2 )< \infty, \, \text{and} \, \beta_k = o(a_k ).\]
\end{assumption}

By Lemma~\ref{lemma:NE_existence} and Theorem~\ref{thm:gradient_flow_convergence}, the pair $(x^\dagger, p^\dagger)$ is the unique fixed point of~\eqref{eq:iteration} in $\X \times \P_c$.
The theorem that follows
establishes that iteration~\eqref{eq:iteration}
converges to the socially optimal pair $(x^\dagger, p^\dagger)$ from
any initial condition in $\X \times \P_c$. 

\begin{theorem}
\label{thm:ttsa_convergence}
Suppose Assumptions \ref{ass:strong_monotonicity}-\ref{ass:timescale} hold.
Then, for any initial condition $(x_0, p_0) \in \X \times \P_{ c}$, the sequence $(x_k, p_k)_{k\geq 0}$ generated by iteration \eqref{eq:iteration} 
satisfies
\begin{equation}
  \begin{aligned}
    \lim_{k \to \infty}\|x_k - x^*(p_k)\|_2 &= 0, \text{ and }
    \lim_{k \to \infty}p_k = p^\dagger.
\end{aligned}
\end{equation}
Consequently, $(x_k, p_k) \to (x^\dagger, p^\dagger)$, as $k \to \infty$.
\end{theorem}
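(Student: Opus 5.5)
The plan is to use the standard two-timescale stochastic approximation argument (Borkar-style ODE method), here deterministic, with the indicator-based update for $p_k$ keeping the slow iterate inside the compact set $\P_c$. Since $(x_k)\subset\X$ (convex combination of points in $\X$ because $a_k\in(0,1]$ and $f$ maps into $\X$) and $(p_k)\subset\P_c$ by construction, both sequences are bounded. This replaces the usual stability hypothesis of two-timescale theory.

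\textbf{Step 1 (fast timescale).} Rewrite the $p$-update as $p_{k+1}=p_k+a_k\,\epsilon_k$ with $\|\epsilon_k\|\le (\beta_k/a_k)\max_{\X}\|\nabla\Phi\|\to 0$ by Assumption~\ref{ass:timescale}. On the timescale $t_k=\sum_{j<k}a_j$, the piecewise-linear interpolation of $(x_k,p_k)$ is then an asymptotic pseudo-trajectory of the coupled ODE $\dot x=f(x,p)-x$, $\dot p=0$. This uses the Lipschitz continuity of $f$ from Assumption~\ref{ass:fast_dynamics} and $\sum a_k^2<\infty$ to control discretization errors via a discrete Gronwall bound over windows of fixed length $T$. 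On each such window $p$ is essentially frozen, and the uniform $\mathcal{KL}$ bound gives $\|x(t)-x^*(p)\|\le\phi(\mathrm{diam}\,\X,t)$ uniformly in $p\in\P_c$. Choosing $T$ large and using continuity of $x^*$ (Lemma~\ref{lemma:NE_properties}(i), $\tfrac{2}{m}$-Lipschitz), this gives $\|x_k-x^*(p_k)\|\to0$. Uniformity of $\phi$ over the compact $\P_c$ is what makes this step work.

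\textbf{Step 2 (slow timescale).} Write $p_{k+1}=p_k+\beta_k\big(g_*(p_k)+e_k\big)\mathbf 1_k$ with $e_k=\nabla\Phi(x_k)-\nabla\Phi(x^*(p_k))$, where $\|e_k\|\le \LDPhi\|x_k-x^*(p_k)\|\to0$ by Step 1. The delicate point is the indicator. I will show that it is eventually always $1$ along a neighborhood of the limit set, or otherwise handle it directly with the Lyapunov function $V(p)=\Phi(x^*(p))-\Phi(x^\dagger)$ from Theorem~\ref{thm:gradient_flow_convergence}. Since $\nabla V=\D x^*(p)^\top\nabla\Phi(x^*)$ is Lipschitz on $\P_c$ (Lemma~\ref{lemma:NE_properties}(iii),(iv) and Assumption~\ref{ass:social_cost}), a descent-lemma expansion gives, whenever the step is accepted,
\[
V(p_{k+1})\le V(p_k)+\beta_k\, g_*^\top \Sym(\D x^*(p_k))\, g_* + \beta_k C\|e_k\|+C\beta_k^2 .
\]
When the step is rejected, $V(p_{k+1})=V(p_k)$. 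Using compactness, $-g_*^\top\Sym(\D x^*)g_*\ge \kappa\|g_*\|^2$ on $\P_c$ for some $\kappa>0$, by nonsingularity and negative definiteness in Lemma~\ref{lemma:NE_properties}(ii) together with continuity. Moreover, $g_*(p)=\nabla\Phi(x^*(p))$ vanishes only at $p^\dagger$.

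\textbf{Step 3 (main obstacle: rejected steps).} I must rule out the iterate sticking near $\partial\P_c$ with steps repeatedly rejected. Near the boundary $\{V=c\}$, with $p$ away from $p^\dagger$, the accepted increment satisfies $V(p+\beta g)\le V(p)-\beta\kappa\|g\|^2/2$ once $\beta_k$ and $\|e_k\|$ are small. So for $k$ large the candidate point lies in the strict sublevel set $\{V<c\}$, hence in $\P_c$, and the step is accepted. This requires that $\{V\le c\}\cap\P$ equal $\P_c$ and that the short segment stay in $\P$, which follows from compactness of $\P_c$ in the open set $\P$ (Theorem~\ref{thm:gradient_flow_convergence}), giving a positive distance to $\partial\P$. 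Therefore, after finitely many steps, all updates are accepted. The recursion then becomes $V(p_{k+1})\le V(p_k)-\beta_k\kappa\|g_*(p_k)\|^2+\beta_k\delta_k$ with $\delta_k\to0$. A standard argument using $\sum\beta_k=\infty$ yields $\liminf\|g_*(p_k)\|=0$ and then $V(p_k)\to0$, since excursions of $V$ are bounded by $\sum$ small terms. Equivalently, the interpolated $p$ is an asymptotic pseudo-trajectory of \eqref{eq:gradient_flow}, whose only internally chain-transitive set in $\P_c$ is $\{p^\dagger\}$. Hence $p_k\to p^\dagger$, and combining this with Step 1 and continuity of $x^*$ gives $x_k\to x^*(p^\dagger)=x^\dagger$.
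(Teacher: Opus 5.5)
Your proposal is correct. Your fast-timescale step is essentially the paper's Lemma~\ref{lemma:fast_tracking}; the difference is on the slow timescale.

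The paper proves only the weaker fact $\sum_k \beta_k\mathbf 1\{p_k+\beta_k\nabla\Phi(x_k)\in\P_c\}=\infty$ (Lemma~\ref{lemma:PE_trial_2}). It does so by contradiction at a single limit point $p_\infty$, treating the interior case and the case $V(p_\infty)=c$ separately. It then re-times the slow iterate with $t_k=\sum_{j<k}\delta_j$ and invokes an asymptotic-pseudotrajectory result on the compact, nonconvex, forward-invariant $\P_c$ (Lemma~\ref{lemma:nonconvex_sa}, i.e.\ Borkar's internally-chain-transitive-set theorem).

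You instead make the same Taylor/descent estimate uniform. On the compact shell $\{p\in\P_c: V(p)\ge c/2\}$, $\|g_*\|$ is bounded below, so the candidate is accepted once $\beta_k$ and $\|x_k-x^*(p_k)\|_2$ are small. On $\{V<c/2\}$, acceptance follows from $|V(\hat p_k)-V(p_k)|\le \sigma_2 M^2\beta_k$ (with $M=\sup_{\X}\|\nabla\Phi\|_2$) together with $\beta_k M<\dist(\P_c,\R^n\setminus\P)$. This gives the stronger conclusion that only finitely many updates are rejected. After that, a plain discrete Lyapunov recursion yields $V(p_k)\to0$ and hence $p_k\to p^\dagger$. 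The descent lemma applies on the segment $[p_k,\hat p_k]\subset\P$ because $\nabla V=\D x^*(\cdot)^\top\nabla\Phi(x^*(\cdot))$ is Lipschitz on all of $\P$ by Lemma~\ref{lemma:NE_properties}.

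Your route is more elementary and self-contained. It avoids the time change and the chain-transitivity machinery. It also explains the numerical observation that the indicator rejects nothing after a transient, which Lemma~\ref{lemma:PE_trial_2} as stated does not actually assert. The paper's route needs only local information at one limit point and is modular: it plugs into general stochastic-approximation theory that would also cover limiting flows lacking an explicit Lyapunov function. Here $V$ is available, so your direct argument loses nothing.
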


\begin{remark}
    Under Assumptions~\ref{ass:fast_dynamics} and~\ref{ass:timescale}, iteration~\eqref{eq:iteration} 
    can be viewed as a two-timescale stochastic approximation  (TTSA) scheme~\cite{borkarStochasticApproximation2008}. 
    However, its convergence cannot be established by a direct application of standard arguments
    due to the non-convexity of $\P_c$ and the use of the indicator function in~\eqref{eq:iterations_p}.
    \end{remark}

We preface the proof of Theorem~\ref{thm:ttsa_convergence} with three auxiliary lemmas. First, 
Lemma~\ref{lemma:fast_tracking} proves that iteration~\eqref{eq:iterations_x} follows the slow-moving equilibria $(x^*(p_k))_k$ on a faster timescale, satisfying $\|x_k- x^*(p_k)\|_2 \to 0$, even when $P_c$ is nonconvex. 
Second, Lemma~\ref{lemma:PE_trial_2} establishes that the indicator function in~\eqref{eq:iterations_p} is activated infinitely often for any initial condition $(x_0,p_0) \in \X \times \P_c$, so the indicator is asymptotically negligible.
Finally, Lemma~\ref{lemma:nonconvex_sa} establishes that standard asymptotic pseudo-trajectory results of stochastic approximation extend to approximation schemes over compact forward-invariant domains, without requiring convexity.

\begin{lemma}
\label{lemma:fast_tracking}
Suppose Assumptions~\ref{ass:strong_monotonicity}--\ref{ass:timescale} hold.
Let $(x_k, p_k)_{k \geq 0}$ be generated by dynamics~\eqref{eq:iteration}
with $(x_0, p_0) \in \mathcal{X} \times \mathcal{P}_c$.  Then
\(
  \|x_k - x^*(p_k)\|_2 \to 0,
\)
as $k \to \infty$.
\end{lemma}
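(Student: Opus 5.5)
We treat \eqref{eq:iterations_x} as a perturbed Euler discretization of the fast ODE $\dot x = f(x,p)-x$ with $p$ frozen, and compare it over windows of fixed ODE-time length $T$ with the exact flow at the current incentive. The key structural fact is that the slow iterate moves negligibly on the fast clock. By \eqref{eq:iterations_p}, $\|p_{k+1}-p_k\|_2 \le \beta_k \|\nabla\Phi(x_k)\|_2 \le \beta_k M$, where $M=\max_{x\in\X}\|\nabla\Phi(x)\|_2<\infty$ by compactness of $\X$ and continuity of $\nabla\Phi$. The indicator also keeps $p_k\in\P_c$ for all $k$, and $\P_c$ is compact by Theorem~\ref{thm:gradient_flow_convergence}. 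Since $\beta_k=o(a_k)$, we have $\|p_{k+1}-p_k\|_2 = o(a_k)$. By Lemma~\ref{lemma:NE_properties}-(i), $x^*$ is $\tfrac{2}{m}$-Lipschitz on $\P$, so $\|x^*(p_{k+1})-x^*(p_k)\|_2=o(a_k)$. No convexity of $\P_c$ enters: we only need the Lipschitz bound of $x^*$ along consecutive iterates. These are $O(\beta_k)$-close, and for $k$ large the segment between them lies inside the open set $\P$, because $\P_c$ is a compact subset of $\P$ and so has positive distance to $\partial\P$. Alternatively, we can use that the Lipschitz bound comes from the variational characterization, which holds globally.

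Next we set up the comparison. Let $t_k=\sum_{j<k}a_j$, and let $\bar x(t)$ be the piecewise-linear interpolation of $(x_k)$ on the fast clock. We write $x_{k+1}=x_k+a_k\big(f(x_k,\bar p)-x_k\big)+a_k\varepsilon_k$ with $\bar p=p_{n}$ frozen at the start of a window $[t_n,t_n+T]$, where $\varepsilon_k=f(x_k,p_k)-f(x_k,p_n)$. Joint Lipschitzness of $f$ (Assumption~\ref{ass:fast_dynamics}) gives
\[
\|\varepsilon_k\|_2\le L_f\|p_k-p_n\|_2\le L_f M\sum_{j=n}^{k-1}\beta_j \le L_f M \Big(\sup_{j\ge n}\tfrac{\beta_j}{a_j}\Big)(T+1)\to 0 .
\]
The standard discrete Gronwall argument (vector field $f(\cdot,\bar p)-\mathrm{id}$ is $(L_f+1)$-Lipschitz and bounded on compact $\X$, and $\sum a_k^2<\infty$) then gives
\[
\sup_{t\in[t_n,t_n+T]}\|\bar x(t)-x^{(n)}(t)\|_2\to 0
\]
as $n\to\infty$. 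Here $x^{(n)}$ solves the frozen ODE from $x_n$ at time $t_n$. Note that $x_k\in\X$ always, since $f$ maps into $\X$, $a_k\in(0,1]$, and $\X$ is convex.

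Finally, we combine this with the uniform $\mathcal{KL}$ bound. Fix $\epsilon>0$ and let $D=\operatorname{diam}\X$. Choose $T$ with $\phi(D,T)\le\epsilon/3$; uniformity over $p\in\P_c$ and $x(0)\in\X$ is exactly what Assumption~\ref{ass:fast_dynamics} provides. Then $\|x^{(n)}(t_n+T)-x^*(p_n)\|_2\le\epsilon/3$. For $n$ large, the tracking error is $\le\epsilon/3$. Moreover, $\|x^*(p_k)-x^*(p_n)\|_2\le \tfrac{2}{m}M\sum_{j=n}^{k}\beta_j\le\epsilon/3$ for $k$ in the window. Hence $\|x_k-x^*(p_k)\|_2\le\epsilon$ at the window end. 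To get the bound for all large $k$, not only at window ends, we use the fact that the $\mathcal{KL}$ bound holds for every $t$, so $\|x^{(n)}(t)-x^*(p_n)\|_2\le\phi(D,t-t_n)$. We then chain windows starting at successive indices: every large $k$ lies at ODE-time at least $T$ after some window start $n$ with $t_k-t_n\in[T,2T]$, and the estimates above on windows of length $2T$ still hold.

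The main obstacle is the second step, controlling the frozen-$p$ perturbation uniformly. This requires the $o(a_k)$ drift of $p_k$ and the uniformity of both $\phi$ and $L_f$ over compact $\P_c$. Uniformity of $\phi$ is essential because $p$ changes between windows. If the Gronwall step proves delicate, we would first try a direct Lyapunov-free estimate, $\|\bar x-x^{(n)}\|\le e^{(L_f+1)2T}\big(\sum a_k^2 C+\sup\|\varepsilon_k\|(2T)\big)$, which suffices.
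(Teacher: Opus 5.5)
Your proposal is correct and follows essentially the same route as the paper's (sketched) proof. You freeze the slow incentive over fast-clock windows using $\beta_k = o(a_k)$, use a Gr\"onwall argument to show the interpolated $x$-iterates shadow the frozen ODE $\dot x = f(x,p_n)-x$, and then combine the uniform $\mathcal{KL}$ bound with the $\frac{2}{m}$-Lipschitzness of $x^*$ (which, as you note, needs no convexity of $\P_c$) on windows of fast-time length between $T$ and $2T$. Your write-up also fills in details the paper leaves to references, namely the explicit bound on the perturbation $f(x_k,p_k)-f(x_k,p_n)$ and the choice of window start $n$ for an arbitrary large $k$.
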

\begin{proof}
Let $\delta_k = \beta_k \mathbf{1}\{p_k+\beta_k \nabla \Phi(x_k) \in \P_c \}$. 
Define $h(x,p) = f(x,p) - x$ so~\eqref{eq:iteration}
can be written as
\begin{subequations}
\label{eq:iteration_augmented_time1}  
\begin{align}
    x_{k+1} & = x_k + a_k h(x_k,p_k) \label{eq:iteration_augmented_time1_x}, \\
    p_{k+1} & = p_k + \delta_k \nabla \Phi(x_k). \label{eq:iteration_augmented_time1_p} 
\end{align}
\end{subequations}

If $\P_c$ is convex,~\eqref{eq:iteration_augmented_time1} corresponds to a TTSA scheme and the result follows from established techniques (see e.g.~\cite[p. 66, Lemma~1]{borkarStochasticApproximation2008}).
We show that the result holds if $\P_c$ is nonconvex.
Due to space limitations, we provide only a sketch of the proof here.
Let
\(M = \sup_{x \in\mathcal{X}}\|\nabla\Phi(x)\|_2\), 
\(B = \sup_{\mathcal{X}\times\mathcal{P}_c}\|h(x,p)\|_2,\)
and 
\(
  m(n,T) = \max\{k \geq n : \sum_{j=n}^{k-1} a_j \leq T\}.
\)
The proof will proceed in three steps.

\emph{Step 1:}
Since $\delta_k \leq \beta_k = o(a_k)$, 
there exists some $N_\varepsilon$ such that $\delta_k \leq \varepsilon a_k$,
$k \geq N_\varepsilon$.  For $n \geq N_\varepsilon$ and $k \in [n,\, m(n,T)]$,
\begin{equation}
  \label{eq:slow_frozen}
  \|p_k - p_n\|_2
  \;\leq\; M \sum_{j=n}^{k-1}\delta_j
  \;\leq\; \varepsilon M \sum_{j=n}^{k-1} a_j
  \;\leq\; \varepsilon MT.
\end{equation}
By~\eqref{eq:slow_frozen}, it holds that $\sup_{n \leq k \leq m(n,T)}\|p_k - p_n\|_2 \to 0$ as $n \to \infty$ for each $T$ fixed. 

\emph{Step 2:}
Let $s_k = \sum_{j =0}^{k-1} a_j$ and denote with $\bar{x}(s)$ the piecewise-linear
interpolation of $(x_k)_k$ on $[s_k, s_{k+1}]$ and the convex $\X$ . 
Let $x^{n}(s)$, $s \geq n$ be the trajectory of
$\dot{x} = h(x,p_n)$ with $x^n(0) = x_n$.  
By arguments similar to those presented in~\cite[p. 12, Lemma~1]{borkarStochasticApproximation2008}, it follows that
\begin{equation}
  \label{eq:fast_APT}
  \sup_{0 \leq s \leq T} \|\bar{x}(s_n + s) - x^n(s)\|_2 \to 0, \text{ as } n\to \infty.
\end{equation}

\emph{Step 3:}
Finally, we leverage the uniform $\mathcal{KL}-$bound $\phi$ of Assumption~\ref{ass:fast_dynamics}, property~\eqref{eq:fast_APT} and the Lipshitzness of the $x^*$ map to conclude that
\[\|x_k-x^*(p_k)\|_2 \leq a_{k-1}B + \epsilon + \frac{2}{m}\omega(n,2T_\epsilon),\]
for any $\epsilon >0$ and appropriate $T_\epsilon$. The result follows.
\end{proof}

\begin{lemma}
\label{lemma:PE_trial_2}
    Suppose Assumptions \ref{ass:strong_monotonicity}-\ref{ass:timescale} hold.
    Let $(x_k,p_k)_{k\geq 0}$ be a sequence generated by iteration~\eqref{eq:iteration} with initial condition $(x_0,p_0) \in \X \times \P_{ c}$.
    It then holds that
    \[\sum_{t=0}^\infty \beta_k \mathbf{1}\{p_k + \beta_k \nabla \Phi(x_k) \in \P_c\} = \infty.\]
    \end{lemma}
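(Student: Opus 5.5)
Suppose, for contradiction, that the sum in Lemma~\ref{lemma:PE_trial_2} is finite. Since $\sum_k \beta_k = \infty$, the indicator must then vanish on a set of indices $K$ with $\sum_{k\in K}\beta_k = \infty$, while $p_k$ changes only on the complementary set, where $\sum \beta_k<\infty$. Because $\|p_{k+1}-p_k\|_2 \le M\delta_k$ and $\sum_k \delta_k<\infty$, the sequence $(p_k)_k$ is Cauchy and converges to some $\bar p \in \P_c$, using compactness of $\P_c$ from Theorem~\ref{thm:gradient_flow_convergence}. By Lemma~\ref{lemma:fast_tracking} and continuity of $x^*$ (Lemma~\ref{lemma:NE_properties}-(i)), $x_k \to x^*(\bar p)$. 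Consequently $\nabla\Phi(x_k) \to g_*(\bar p)$.

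We now split into two cases. If $\bar p = p^\dagger$, then $\bar p$ lies in the interior of $\P_c$: $V(p^\dagger)=0<c$, $V$ is continuous, and $\P$ is open. Eventually $p_k$ lies in a ball around $p^\dagger$ contained in $\P_c$, and $\beta_k\nabla\Phi(x_k)\to 0$ because $\beta_k\to0$ and $\nabla\Phi$ is bounded by $M$. So $p_k+\beta_k\nabla\Phi(x_k)\in\P_c$ for all large $k$, the indicator equals one eventually, and the sum diverges, which is a contradiction.

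If $\bar p\neq p^\dagger$, write $d=g_*(\bar p)$. Then $\nabla V(\bar p)^\top d = d^\top \Sym(\D x^*(\bar p))\, d<0$ by Lemma~\ref{lemma:NE_properties}-(ii), and $d\ne0$ since $\nabla\Phi(x^*(\bar p))=0$ only at $x^\dagger$. We use a first-order Taylor expansion of $V=\Phi\circ x^*$, which is $C^1$ with Lipschitz gradient near $\bar p$ by Lemma~\ref{lemma:NE_properties}-(iii),(iv) and Assumption~\ref{ass:social_cost}. For $p$ near $\bar p$, $u$ near $d$, and small $\beta>0$,
\[
V(p+\beta u) \le V(p) + \beta\,\nabla V(p)^\top u + \tfrac{L}{2}\beta^2\|u\|_2^2 ,
\]
and $\nabla V(p)^\top u \le -\eta<0$ uniformly in such a neighborhood. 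Hence $V(p_k+\beta_k\nabla\Phi(x_k)) < V(p_k)\le c$ for all large $k$. Since $\P$ is open and $p_k+\beta_k\nabla\Phi(x_k)$ is within $O(\beta_k)$ of $\bar p\in\P$, the trial point lies in $\P$ and therefore in $\P_c$. The indicator again equals one eventually, which is a contradiction.

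The main obstacle is this last step. We need a uniform neighborhood of $\bar p$ inside $\P$ on which the Taylor bound holds, with the constants $\eta$ and $L$ chosen uniformly. The difficulty arises when $\bar p$ lies on the boundary $\{V=c\}$ of $\P_c$ and the descent margin is only first order in $\beta_k$. The strict negativity of $\Sym(\D x^*)$ together with $\beta_k\to0$ handles this. Note also that the contradiction argument is cleanest in its stronger form: we in fact derive that the indicator is eventually identically one whenever $p_k$ converges.
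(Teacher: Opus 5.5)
Your proof is correct and follows essentially the same route as the paper: assume the sum is finite, conclude $p_k \to \bar p \in \P_c$, then use a first-order Taylor expansion of $V$ together with $\Sym(\D x^*) \prec 0$ to show the trial point strictly decreases $V$, so the indicator is eventually one. The differences are cosmetic. You split on $\bar p = p^\dagger$ versus $\bar p \neq p^\dagger$ rather than on interior versus boundary of $\P_c$, and you absorb the tracking error through $\nabla\Phi(x_k) \to g_*(\bar p)$ and a uniform continuity margin, where the paper bounds an explicit cross-term.
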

\begin{proof}
  At each $k$, denote $\hat p_k = p_k + \beta_k\nabla \Phi(x_k)$.
  Suppose for some initial condition the sequence $\left( x_k,p_k \right)_{k \geq 0}$ satisfies
  \begin{equation}
  \label{eq:lemmaPE_contradiction}
    \sum_{k=0}^{\infty} \beta_k \mathbf{1}\{\hat p_k \in \P_c\} < \infty
  .\end{equation}
  Then~\eqref{eq:iterations_p} implies 
  \(
    \sum_{k=0}^{\infty} \|p_{k+1} \!-\! p_k \|_2 < \!\infty,
  \)
  so $p_k \to p_\infty$, where $p_\infty \in \P_c$ is dependent on $\left( x_0, p_0\right)$.


  First, we show that~\eqref{eq:lemmaPE_contradiction} implies a contradiction if $p_\infty \in \interior \P_c$.
  Let $r > 0$ be such that $B_r(p_\infty) \subset \P_c$. We have that
  \(
    \|\hat p_k - p_\infty \|_2 \leq \|p_k - p_\infty \|_2 + \beta_k M,
  \)
  $M = \sup_{x\in\X} \|\nabla \Phi(x)\|_2$,
  so there exists a $T$ such that $\|\hat p_{T}-p_\infty\|_2 < r$, and hence $\mathbf{1}\{\hat p_k \in \P_c\} = 1 $ for all $k \geq T$,
  which contradicts~\eqref{eq:lemmaPE_contradiction}.

  Second,~\eqref{eq:lemmaPE_contradiction} also implies a contradiction when $p_\infty \in \partial\P_c$.
  Let $V(p) = \Phi_*(p) -\Phi(x^\dagger)$.
  For sufficiently large $k$,  $\hat p_k = p_k + \beta_k \nabla \Phi(x_k) \in \P$ since $p_k \in \P_c$,
  so by a Taylor expansion
  \begin{align}
  & V(\hat p_k)  = V(p_k) + O(\beta_k^2) \notag \\
  &  + \beta_k \nabla \Phi(x^*(p_k))^\top \! \D x^*(p_k) \nabla \Phi(x^*(p_k)) \tag{\textrm{i}} \label{eq:lemmaPE_2}\\ 
  &  + \beta_k \nabla \Phi(x^*(p_k))^\top \! \D x^*(p_k) \big(  \nabla \Phi(x_k) \! - \! \nabla \Phi(x^*(p_k))\big). \tag{\textrm{ii}} \label{eq:lemmaPE_3} 
  \end{align}

  Observe the following regarding 
  the RHS above.
  For term \eqref{eq:lemmaPE_2}, by Assumption~\ref{ass:strong_monotonicity} we have that for any $w \in \mathbb{R}^n$,
  \[ 
  \begin{aligned}
    w^\top \Sym \left( \D x^*(p_k) \right) w  
    & \overset{(a)}{=} -v^\top \Sym \left( \D G_0(x^*(p_k)) \right) v \\
    & \overset{(b)}{\leq} - \frac{m}{2}\|v\|_2^2  \overset{(c)}{\leq} -\frac{m}{2} \sigma_1^2 \|w\|_2^2,
\end{aligned} 
  \]
  where we denote $v = \D G_0(x^*(p_k))^{-1} w $, $(a)$  is due to the fact that $Dx^*(p_k) = -DG_0(x^*(p_k))$, $(b) $ is due to Assumption~\ref{ass:strong_monotonicity}, and
  $(c)$ follows from the observation that
  \[\|w\|_2 = \| DG_0(x^*(p_k)) v \|_2 \leq  \frac{1}{\sigma_1}  \|v\|_2.\]
  By Lemma~\ref{lemma:NE_properties} and Assumption~\ref{ass:social_cost}, term~\eqref{eq:lemmaPE_3} satisfies
  \[
    \eqref{eq:lemmaPE_3} \leq \beta_k \|\nabla \Phi(x^*(p_k)) \|_2 \sigma_2  \LDPhi \|x_k - x^*(p_k) \|_2.
  \]
  Conclude that
  \begin{align*}
   V(\hat p_k)  \leq V(p_k) &+ O(\beta_k^2)
  - \beta_k  \frac{m}{2}\sigma_1^2\|\nabla \Phi(x^*(p_k))\|_2^2  \\
  &  + \beta_k \sigma_2 \LDPhi\|\nabla \Phi(x^*(p_k))\|_2  \|x_k-x^*(p_k)\|_2.
  \end{align*}
  We have by Lemma~\ref{lemma:fast_tracking} that $\|x_k-x^*(p_k)\|_2 \to 0$ and
  $\|\nabla \Phi(x^*(p_k))\|_2 \to \|\nabla \Phi(x^*(p_\infty))\|_2 > 0$, since $p_\infty \in \partial \P_c$.
  For sufficiently large $k$, the negative quadratic dominates both the $O(\beta_k^2)$ term and the cross-term, implying
  \( 
   V(\hat p_k) < V(p_k) \leq c
  .\)
  Hence,
  \(
  \sum_{k=0}^{\infty} \beta_k \mathbf{1}\{p_k + \beta_k \nabla \Phi(x_k) \in \P_c\} = \infty,
  \)
  which contradicts~\eqref{eq:lemmaPE_contradiction}.
\end{proof}

\begin{lemma}
\label{lemma:nonconvex_sa}
Let $\P_c$ be a compact (nonconvex) subset of an open $U \subset \R^n$, and let $g:U \to \R^n$ be $L_g-$Lipschitz.
Suppose $\P_c$ is forward invariant for dynamics $\dot{p} = g(p)$ and contains a unique globally asymptotically stable equilibrium $p^\dagger \in \interior(\P_c).$  Let $(p_k)_{k\geq 0} \subset \P_c$ satisfy $p_{k+1} = p_k + a_k g(p_k)$, where $\sum_{k=0}^\infty a_k = \infty$, $\sum_{k=0}^\infty a_k^2 < \infty$. Then $\lim_{k\to \infty}p_k = p^\dagger$.
\end{lemma}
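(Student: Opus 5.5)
The plan is to follow the standard ODE method for stochastic approximation (Benaïm's asymptotic pseudo-trajectory argument, as in~\cite[Ch.~2]{borkarStochasticApproximation2008}). Convexity of the domain is not needed here, because the iterates are \emph{assumed} to stay in $\P_c$; the only place where the domain matters is in guaranteeing that comparison ODE trajectories stay where $g$ is defined and Lipschitz. Concretely, let $t_k=\sum_{j<k}a_j$ and let $\bar p(t)$ be the piecewise-linear interpolation of $(p_k)$ on $[t_k,t_{k+1}]$. Since $\P_c\subset U$ is compact and $U$ is open, there is $\rho>0$ with $\P_c^\rho=\{p:\dist(p,\P_c)\le\rho\}\subset U$. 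Then $g$ is bounded on $\P_c^\rho$, say by $M_g$. Because $a_k\to0$, each segment $[p_k,p_{k+1}]$ has length at most $a_kM_g<\rho$ for large $k$, so $\bar p(t)\in\P_c^\rho$ eventually. This replaces the convexity that would normally keep the interpolation inside the domain.

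\emph{Step 1 (pseudo-trajectory).} For each $n$, let $p^n(\cdot)$ solve $\dot p=g(p)$ with $p^n(t_n)=p_n$. By forward invariance of $\P_c$, $p^n(t)\in\P_c$ for all $t\ge t_n$. I then prove
\[\sup_{t\in[t_n,t_n+T]}\|\bar p(t)-p^n(t)\|_2\to0\quad\text{as } n\to\infty.\]
For grid points, write $\bar p(t_k)-p^n(t_k)=\sum_{j=n}^{k-1}\int_{t_j}^{t_{j+1}}(g(p_j)-g(p^n(s)))\,ds$. Split the integrand as $g(p_j)-g(p^n(t_j))$ plus the discretization error $g(p^n(t_j))-g(p^n(s))$. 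The latter is bounded by $L_gM_ga_j$. Hence
\[\|\bar p(t_k)-p^n(t_k)\|\le L_g\sum_{j=n}^{k-1} a_j\|\bar p(t_j)-p^n(t_j)\|+L_gM_g\sum_{j\ge n}a_j^2.\]
The discrete Gronwall inequality gives the bound $e^{L_gT}L_gM_g\sum_{j\ge n}a_j^2$, which tends to $0$ because $\sum a_j^2<\infty$. Interpolation between grid points adds only $O(a_k)$. No noise term appears, so the martingale part of the usual argument is absent.

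\emph{Step 2 (uniform attraction on the compact set).} This is the main obstacle. I need that for every $\epsilon>0$ there is $T_\epsilon$ such that every ODE solution starting in $\P_c$ satisfies $\|p(t)-p^\dagger\|<\epsilon$ for all $t\ge T_\epsilon$. Pointwise convergence to $p^\dagger$ does not give this directly, so I combine it with stability and compactness.
\begin{itemize}
\item Asymptotic stability gives $\delta\in(0,\epsilon)$ such that every trajectory starting in $B_\delta(p^\dagger)$ stays in $B_\epsilon(p^\dagger)$.
\item Each $q\in\P_c$ reaches $B_{\delta/2}(p^\dagger)$ at some finite time $\tau_q$.
\item By continuous dependence on initial data, which holds since $g$ is Lipschitz and trajectories stay in $\P_c$, every trajectory starting in a neighborhood of $q$ lies in $B_\delta(p^\dagger)$ at time $\tau_q$.
\end{itemize}
Compactness of $\P_c$ lets me extract a finite subcover, and $T_\epsilon=\max\tau_q$ over the subcover works. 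In the paper's setting, an alternative is to use the Lyapunov function $V$ from Theorem~\ref{thm:gradient_flow_convergence}, but I will give the argument in the abstract form stated.

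\emph{Step 3 (conclusion).} Fix $\epsilon$ and set $T=T_\epsilon$. By Step 1 choose $N$ such that for $n\ge N$ the tracking error on $[t_n,t_n+2T]$ is below $\epsilon$. Given large $k$, pick $n$ with $t_k-t_n\in[T,2T]$; this is possible because $a_j\to0$, $\sum a_j=\infty$, and $n\ge N$ once $k$ is large. Then
\[\|p_k-p^\dagger\|\le\|p_k-p^n(t_k)\|+\|p^n(t_k)-p^\dagger\|<2\epsilon,\]
where the second term is below $\epsilon$ by Step 2, since $p^n(t_n)=p_n\in\P_c$. Since $\epsilon$ is arbitrary, $p_k\to p^\dagger$.
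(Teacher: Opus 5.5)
Your proof is correct, and its first half is essentially the paper's. Both interpolate on the clock $t_k=\sum_{j<k}a_j$ and use a Gr\"onwall bound to show the interpolation is an asymptotic pseudotrajectory of $\dot p=g(p)$.

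The routes diverge at the conclusion. The paper finishes by citing the Bena\"im--Borkar limit-set theorem: the limit set of a bounded asymptotic pseudotrajectory is a compact connected internally chain transitive invariant set. It leaves two things implicit: how that theorem transfers from a globally Lipschitz field on $\R^n$ to $g$ defined only on $U$, and why $\{p^\dagger\}$ is the only such set in $\P_c$.

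You instead derive a uniform attraction time $T_\epsilon$ for all initial conditions in $\P_c$. You get it from Lyapunov stability, pointwise attraction, Lipschitz continuous dependence, forward invariance and compactness. You then close with a triangle inequality over a window $t_k-t_n\in[T_\epsilon,2T_\epsilon]$. Your argument is self-contained and shows exactly where compactness and forward invariance of $\P_c$ enter. The paper's is shorter by citation and would extend to general attractors, at the cost of transparency about its hypotheses.

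A minor point in your favor: you compare $\bar p$ with $p^n$ only at grid points, so you only ever evaluate $g$ on $\P_c$ (at $p_j$ and along $p^n(\cdot)$). The enlargement $\P_c^\rho\subset U$ you introduce is therefore not needed. The paper's continuous-time Gr\"onwall step evaluates $g(\bar p(t))$ and does need it.

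Similarly, your discretization term $L_gM_g\sum_{j=n}^{k-1}a_j^2$ is at most $L_gM_gT\max_{j\ge n}a_j$ on the window. So beyond $\sum_k a_k=\infty$ you only use $a_k\to0$, not $\sum_k a_k^2<\infty$.
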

\begin{proof}
    Define $t_k = \sum_{j=0}^{k-1}a_j$ and let $\bar p(t):[0, \infty) \to \R^n$ be the linear interpolation of $(p_k)_k$ on $t\in[t_k, t_{k+1}]$, that is,
    \begin{equation}
    \label{eq:nonconvex_sa_interpolation}
        \bar p(t) = p_k + \frac{t-t_k}{a_k}(p_{k+1}-p_k), \quad t \in [t_k, t_{k+1}].
    \end{equation}
    Since $\P_c$ is non-convex, $\bar p(t)$ belongs in the convex hull $\operatorname{co}(\P_c)$ and satisfies $\bar p(t_k) \in \P_c$. Let $B = \sup_{p \in \P_c}\|g(p)\|_2$, so
    \begin{equation}
    \label{eq:nonconvex_sa_temp1}
        \|\bar p(t)- p_k\|_2\leq a_k B, \quad t \in [t_k, t_{k+1}].
    \end{equation}
    Let $\epsilon > 0$ be such that
    \(\P_c^\epsilon = \{p: \dist(p, \P_c) \leq \epsilon\} \subset U.\) Since $a_k \to 0$, there exists some $K$ so that $a_kB \leq \epsilon$ for all $k \geq K$, so~\eqref{eq:nonconvex_sa_temp1} implies $\bar p(t) \in \P_c^\epsilon$ for all $t\geq t_K$. Hence $g(\bar p(t))$ is well-defined for $t \geq t_K$.

    For each $n\geq K$, let $p^n(t)$ be the trajectory of $\dot p = g(p)$ with $p^n(0)=p_n$. The error
    $e_p(t) = \bar p(t_n+t) -p^n(t)$ satisfies
    \[\|e_p(t)\|_2 \leq L_g \int_0^t \|e_p(\tau)\|_2\, d\tau + L_g \max_{k\geq n}a_k B t.\]
    By a similar Grönwall inequality argument as in~\cite[p. 12, Lemma~1]{borkarStochasticApproximation2008}, it follows that for all $T>0$,
    \[\sup_{t\in [0,T]} \|\bar p(t_n+t) - p^n(t) \|_2 \to 0, \text{ as } n\to \infty.\]
    The result follows from the pseudotrajectory convergence to a compact connected internally chain 
    transitive invariant set of $\dot p = g(p)$ (see e.g.~\cite[p. 15, Theorem~2]{borkarStochasticApproximation2008}).
    \end{proof}

The proof of Theorem~\ref{thm:ttsa_convergence} is finalized below.
\begin{proof}[Proof of Theorem~\ref{thm:ttsa_convergence}]
    Let $\delta_k = \beta_k \mathbf{1}\{p_k + \beta_k \nabla \Phi(x_k) \in \P_c\}$.
    By Lemma~\ref{lemma:fast_tracking} it holds that $\|x_k - x^*(p_k)\|_2 \to 0$, so it remains to show $p_k \to p^\dagger$.

    Denote $g_*(p) = \nabla\Phi(x^*(p))$ as in~\eqref{eq:gradient_flow}, so~\eqref{eq:iterations_p} becomes
    \[
    \begin{aligned}
        p_{t+1} & = p_k + \delta_k\bigl(g_*(p_k) + \xi_k\bigr), \\
        \xi_k & = \nabla\Phi(x_k) - \nabla\Phi(x^*(p_k)),
    \end{aligned}
    \]
    where $\|\xi_k\|_2 \leq \LDPhi\,\|x_k - x^*(p_k)\|_2$ is an $o(1)$ term, and $g_*$ is $\Lg$-Lipschitz on $\P$, with $\Lg = 2\LDPhi/m$.
 
    Define $t_k = \sum_{j = 0}^{k-1}\delta_j$, so $\lim_{k\to \infty}t_k = \infty$ by Lemma~\ref{lemma:PE_trial_2}.
    Let $\bar{p}(t)$ be the interpolation of $(p_k)_k$, as in~\eqref{eq:nonconvex_sa_interpolation},
    so
    \[\begin{aligned}
        \|\bar p(t) - p_k \|_2 & \leq \delta_k \|g_*(p_k)+ \xi_k\|_2  \leq \delta_k (B+ \|\xi_k\|_2),
    \end{aligned}\]
    where $B = \sup_{p\in \P_c}\|g_*(p)\|_2$ and $\|\xi_k\|_2 = o(1)$ perturbation.
    Since the perturbation vanishes asymptotically,
    $\bar p(t)$ therefore remains an asymptotic pseudotrajectory for dynamics $\dot{p} = g_*(p)$, 
    and the result follows from by Lemma~\ref{lemma:nonconvex_sa}.
\end{proof}

\section{Numerical Examples}
\label{sec:numerical}

This section presents two illustrative examples that validate the convergence result given in Theorems~\ref{thm:gradient_flow_convergence} and~\ref{thm:ttsa_convergence}. In both examples, the planner's social cost is $\Phi: \X \to \R$, $\Phi(x) = \frac{1}{2}\|x-x^\dagger\|_2^2$, and $x^\dagger \in \interior \X$. 

\subsection{Aggregative Game over a Directed Network}

We consider $n=5$ players competing in an aggregative game over a directed network.
Each has a private cost
\begin{equation}
\label{eq:aggregative_cost}
\ell_i(x) = \frac{1}{2} \big (q_ix_i^2 + a\sum_{j=1}^n w_{ij }x_ix_j\big),
\end{equation}
where $x \in \X = [-2,2]^n$. The parameters $q_i > 0$ represent an agent-specific preference, the matrix $W = [w_{ij}]_{i,j=1}^n$ is a stochastic (non-symmetric) adjacency matrix with zero diagonal, and scalar 
$a > 0$ is the coupling strength between agents.
The nominal pseudo-gradient is a linear map $G_0(x) = Mx$, where $M = Q + a W$ and $Q = \Diag(q_i)$. Assumption~\ref{ass:strong_monotonicity} is satisfied for appropriate  $a$ (details given in Appendix~\ref{sec:supplementary}), so the response map is $x^*(p) = -M^{-1}p$. 

We first verify Theorem~\ref{thm:gradient_flow_convergence} for $100$ initial conditions, sampled uniformly at random from $\interior \P_{c^*}$.
Figure~\ref{fig:gradient_flow} reports the evolution of the social cost $\Phi(x^*(p(t)))$ and the incentive error $\|p(t) - p^\dagger\|_2$ for two initial conditions, corresponding to the smallest and largest sublevel sets among the sampled initial conditions.
It is evident that each sublevel set is forward invariant and 
all trajectories converge to $p^\dagger$.

\begin{figure}
    \centering
    \includegraphics[width=0.9\linewidth]{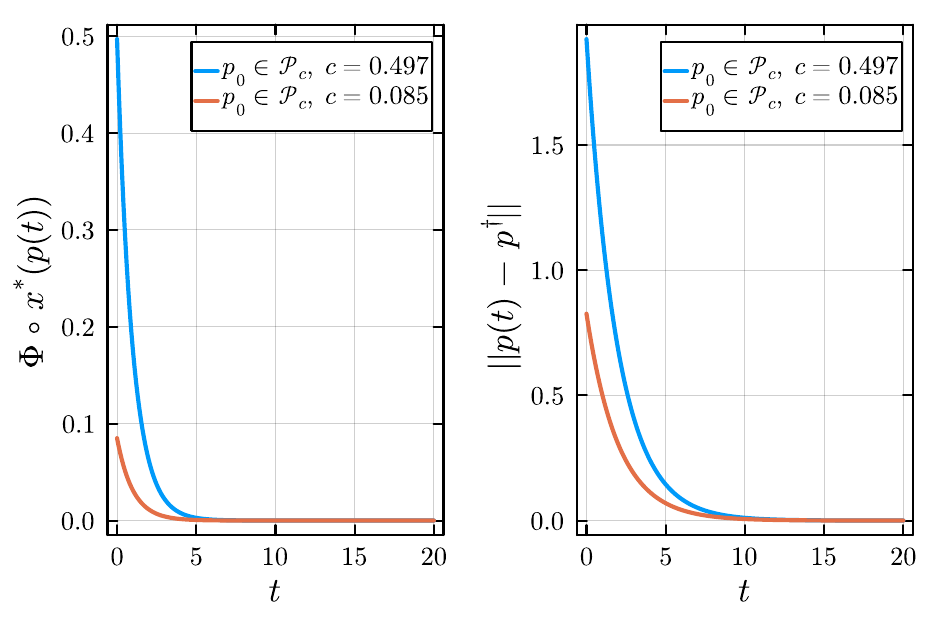}
    \caption{$\Phi(x^*(p(t)))$ and $\|p(t)-p^\dagger\|_2$, under~\eqref{eq:gradient_flow} with initial condition in $ \interior \P_{c^*}$. Trajectories depicted correspond to flows from the maximal and minimal sublevel sets across 100 initial conditions. }
        \label{fig:gradient_flow}
\end{figure}

Moreover, we verify Theorem~\ref{thm:ttsa_convergence} under two choices of the agent dynamics~\eqref{eq:iterations_x}.
We consider agents who adapt their strategies according to the Nash-equilibrium and best-response learning rules, defined as $f^{\textrm{NE}}(x,p) = x^*(p)$ and $f^{\textrm{BR}}(x,p) = - Q^{-1}(p+aWx)$, respectively. Appendix~\ref{sec:supplementary} gives a detailed description of each rule, and illustrates that Assumption~\ref{ass:fast_dynamics} holds. 
Figure~\ref{fig:NetAgg_combined_convergence} presents the evolution of the two-timescale discretization error $\|x_k - x^*(p_k)\|_2$ and the incentive error $\|p_k-p^\dagger\|_2$, comparing the two adaptations. The results have been computed for 100 initial conditions sampled uniformly in $\X \times \P_c$ with $c = 0.8c^*$.

\begin{figure}
    \centering
    \includegraphics[width=\linewidth]{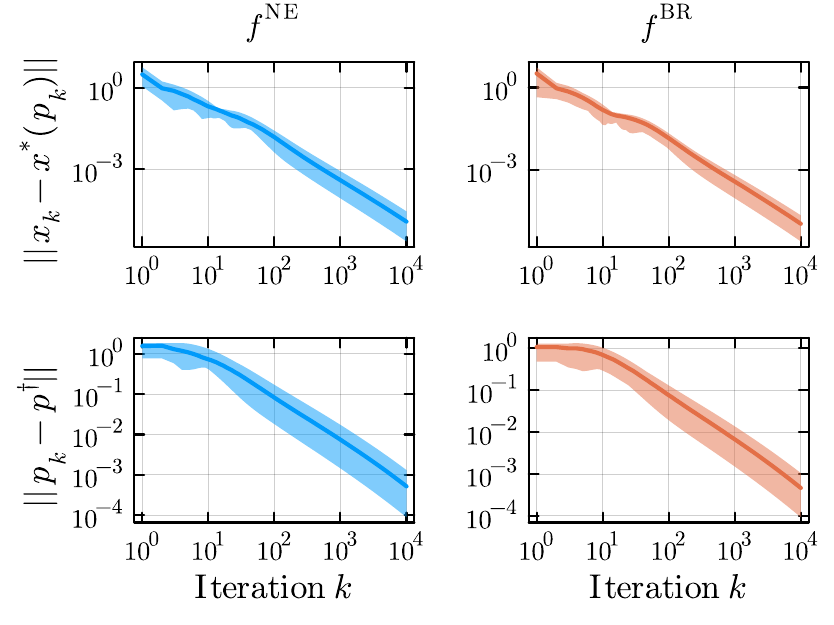}
    \caption{Median (solid) and min–max envelope (shaded) of $\|x_k-x^*(p_k)\|_2$ (top row) and $\|p_k-p^\dagger\|_2$ (bottom row) under iteration~\eqref{eq:iteration}, with $f=f^{\textrm{NE}}$ (left column) and $f=f^{\textrm{BR}}$ (right column). Results represent 100 initial conditions sampled from $\interior \X \times \P_c$, with $c=0.8c^*$.}
    \label{fig:NetAgg_combined_convergence}
\end{figure}

\subsection{Coupled Oscillator Game}

We further investigate a two-player coupled oscillator game adapted from~\cite{ratliffAdaptiveIncentiveDesign2021}. Each player has a private cost
\begin{equation}
\label{eq:oscillator_cost}
\ell_i(x) = -\theta_i \cos(x_i) + \cos (x_1 - x_{2}),
\end{equation}
where $x \in \X = [-\frac{\pi}{3},\frac{\pi}{3}]^2$ and $\theta_i>0$ are scalars. 
We select parameters $\theta_1 = 4.2$ and $\theta_2 = 5$, for which Assumption~\ref{ass:strong_monotonicity} holds (see Appendix~\ref{sec:supplementary}).
In this game, the nominal pseudo-gradient is the nonlinear map
\[G_0(x) = \begin{bmatrix}
    \theta_1 \sin(x_1)  - \sin(x_1-x_2) \\ \theta_2 \sin(x_2) + \sin(x_1-x_2)
\end{bmatrix}.\]
For given $p$, the response $x^*(p)$ can be solved numerically via an associated convex minimization problem (see Appendix~\ref{sec:supplementary}).
Agents adapt their own strategies according to~\eqref{eq:iterations_x}, using a local projected gradient (PG) descent rule,
\(f^{\textrm{PG}}(x,p) = \Pi_{\X}\left( x - \eta G_\gamma(x) \right),\)
where $\eta > 0$ is a small constant, and  $\Pi_{\X}$ is the projection onto the convex set $\X$.

Figure~\ref{fig:Oscillator_trajectories} illustrates the trajectories of 
$(x_k, p_k)_{k \geq 0}$ for
the two-timescale dynamics~\eqref{eq:iteration} in the strategy and incentive spaces,
for the given initial condition and
stepsizes $a_k = O(k^{-0.6})$ and $\beta_k = O(k^{-0.9})$.  Observe that the forward invariance of $\P_{c_0}$ is not guaranteed in the transient regime, where $c_0= \Phi_*(p_0) - \Phi(x^\dagger)$, due to the tracking bias introduced by the discretization error. Nevertheless, incentive iterates remain in $\P_c$, with  $c=0.95c^*$, and converge to $p^\dagger$.
The indicator rejects no updates after the initial transient, as predicted by Lemma~\ref{lemma:PE_trial_2}.
Finally, Figure~\ref{fig:Oscillator_convergence} presents the discretization error $\|x_k-x^*(p_k)\|_2$ and incentive error $\|p_k-p^\dagger\|_2$ for
iteration~\eqref{eq:iteration} while varying the degree of timescale separation $\beta_k / a_k = O(k^{-\gamma})$ for different values of $\gamma>0$. 
Smaller $\gamma$ values introduce a more pronounced tracking bias in the incentive, and larger $\gamma$ values result in diminished finite-time incentive performance.

\begin{figure}
    \centering
    \includegraphics[width=0.9\linewidth]{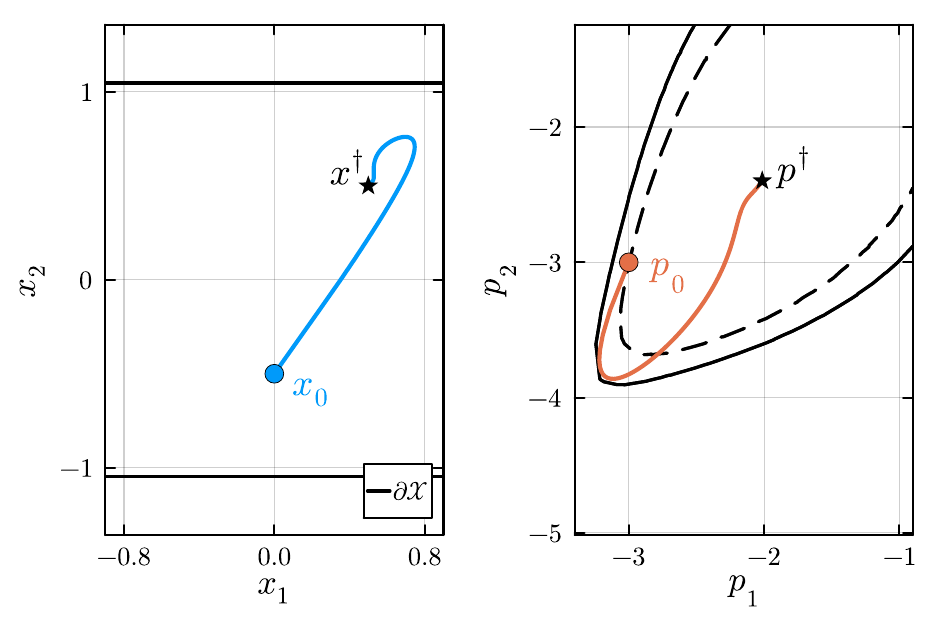}
    \caption{Iterates $(x_k,p_k)_k$ satisfying~\eqref{eq:iteration}, from the initial condition $x_0=(0, -0.5)^\top$ and $p_0=(-3, -3)^\top$, and
    stepsize sequences $a_k = O(k^{-0.6})$ and $\beta_k = O(k^{-0.9})$.
    Solid lines indicate the boundaries $\partial \X$ and $\partial \P_{c}$, with $c =0.95c^*$.
    Dashed line indicates $\partial\P_{c_0}$ with $c_0=0.62c^*$, which corresponds to the sublevel set on which $p_0$ lies.
    } 
    \label{fig:Oscillator_trajectories}
\end{figure}

\begin{figure}
    \centering
    \includegraphics[width=0.9\linewidth]{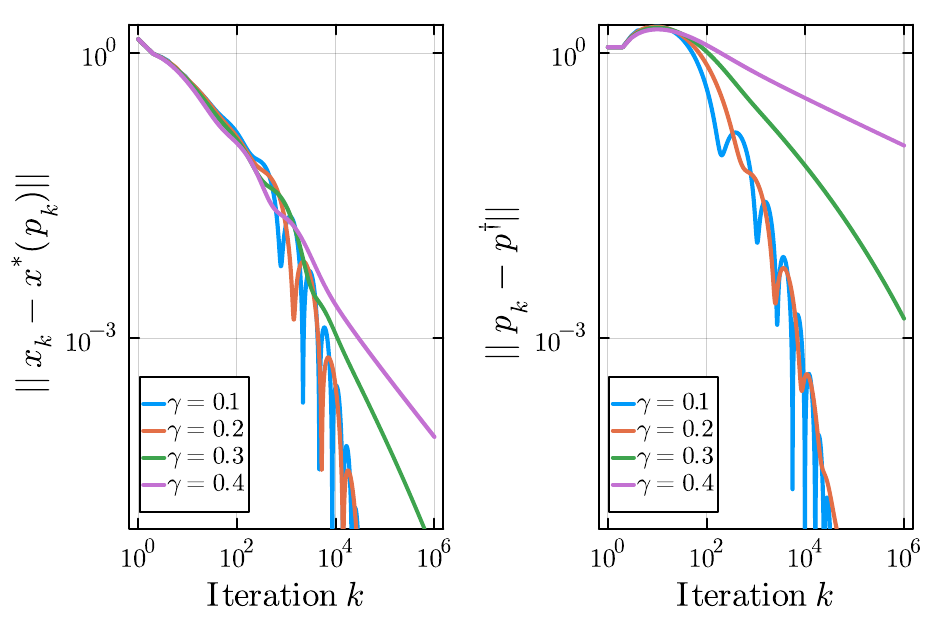}
    \caption{$\|x_k-x^*(p_k)\|_2$ and $\|p_k-p^\dagger\|_2$ under iteration~\eqref{eq:iteration}, with initial condition $x_0=(0, -0.5)^\top$ and $p_0=(-3, -3)^\top$.
    Lines depict variations of the timescale separation degree $\beta_k / a_k = O(k^{-\gamma})$ for different $\gamma>0$.}
    \label{fig:Oscillator_convergence}
\end{figure}

\section{Concluding Remarks}
\label{sec:conclusion}

This work studies adaptive incentive design under information asymmetry, where the planner observes either the Nash equilibrium response or an evolving response trajectory that asymptotically tracks the induced Nash equilibrium.
Our first contribution is to establish a social-gradient dynamics in continuous time for seeking the optimal incentive
which does not require the equilibrium sensitivity information and still yields a descent direction for the planner's objective.
Our second contribution 
is to establish the two-timescale implementation  of the proposed social-gradient dynamics,
in which agents endowed with learning dynamics respond on a faster timescale than the planner. 
Its convergence to the socially optimal pair is proven for any agent learning dynamics that converges uniformly.

We consider two open questions as natural extensions of the present analysis. First, if agents' local costs are $C^0$ only, the pseudo-gradient extends to a set-valued subdifferential, and the first-order optimality condition corresponds to a generalized variational inequality. Second, the proposed two-timescale method 
operates with knowledge of the feasible set $\P_c$. It would be interesting to investigate whether this requirement can be relaxed to knowledge of the set $\X$ alone. A natural extension would be an
algorithm that incrementally refines the operating domain from an initial conservative estimate of the safe set using observations that approach $\partial\P$.

\bibliographystyle{IEEETran} 
\bibliography{bibliography} 
\appendices
\section{Proofs of Supporting Lemmas} 
\label{app:proofs}
\subsection{Proof of Lemma \ref{lemma:NE_existence}}
\begin{proof}
We prove the lemma in two steps:

\textit{Step 1.} We show that $x^*(\cdot)$ is uniquely defined on $\mathbb R^n$.
For any $x,y\in \X$, denote $v = x-y$. Since $\X$ is convex, $x_t = y + tv \in \X$ for all $t\in [0,1]$. Under Assumption \ref{ass:strong_monotonicity},
\[v^\top (G_0(x) - G_0(y)) = \int_0^1 v^\top \D G_0(x_t) v \,dt \geq \frac{m}{2}\|v\|_2^2.\]
Therefore $G_0 (x)$ is $\frac{m}{2}-$strongly monotone on $\X$, and so is $G_\gamma(x)$ for any $p$. This implies that~\eqref{eq:NE_condition} has a unique solution $x^*(p) \in \X$ (see~\cite[Theorem 2.3.3]{facchineiFiniteDimensionalVariationalInequalities2004}), 
so the response map is uniquely defined. 

\textit{Step 2.} We show that the map $x^*(\cdot)$ restricted to $\mathcal P$ is a diffeomorphism. By Assumption \ref{ass:strong_monotonicity}, $\D G_0(x)$ is nonsingular 
for every $x\in \interior\X$ and $G_0$ is injective by strong monotonicity.
It follows that
$G_0:\interior \X \to G_0(\interior \X)$ is a diffeomorphism from $\interior \X$ to $-\P =G_0(\interior \X)$.
Since $G_0$ is an open map, $\P$ is open, and
$x^*(p) = G_0^{-1}(-p)$ defines a $\mathcal{C}^1$ diffeomorphism 
$x^*:\P\to\interior\X$.
\end{proof}

\subsection{Proof of Lemma \ref{lemma:NE_properties}}
\begin{proof}
  \textit{Claim (i).}
  Since $x^* \in \interior \X$, 
  $x = x^* + t d \in \X$ for all small $t>0$ and $d \in \R^n$. By~\eqref{eq:NE_condition}, we have $G_\gamma(x^*)^\top d \geq 0$ for all $d \in \R^n$, which implies $G_\gamma(x^*) = 0$. Further, by 
  Assumption~\ref{ass:strong_monotonicity}, $G_0$ is $\frac{m}{2}$-strongly monotone on $\X$, therefore, for any $p_1, p_2 \in \P$, we have 
  \[\begin{aligned}
     & \frac{m}{2} \|x^*(p_1)-x^*(p_2)\|_2^2 \\ & \leq \left( G_0(x^*(p_1)) - G_0(x^*(p_2))\right)^\top \left(x^*(p_1) - x^*(p_2)\right) \\
     & \overset{(a)}{\leq} \|p_1 - p_2 \|_2 \, \|x^*(p_1) - x^*(p_2) \|_2,
  \end{aligned}\]
  where $(a)$ uses $G_\gamma(x^*(p_1)) = G_\gamma(x^*(p_2)) = 0$.
  By Lemma \ref{lemma:NE_existence} and Claim (i), we have that for all $p \in \P$, $x^*= G_0^{-1}(-p)$ and $\D x^*(p)  = -(\D G_0(x^*))^{-1}$.

  \textit{Claims (ii) and (iii).}
  For any $w \in \R^n\setminus \{0\}$, setting $v=\D G_0(x^*)^{-1}w$,
  \[\begin{aligned}
    & w^\top \Sym(\D x^*) w = - v^\top \Sym(\D G_0(x^*)) v < 0, \text{ and} \\
    &\frac{m}{2} \|w\|_2^2  \leq w^\top\Sym (\D G_0(x^*)) w \leq   \|w\|_2 \|\D G_0(x^*)w\|_2.
  \end{aligned} \]
  It follows that $\sigma_{\max}(\D x^*) = \sigma_{\min}(\D G_0(x^*))^{-1} \leq \frac{2}{m }$,
  and 
  \(\sigma_{\min}(\D x^*) \geq (\max_{x\in X} \sigma_{\max}(\D G_0(x)))^{-1}.\)

  \textit{Claim (iv).}
  Given $p_1,p_2 \in \P$, let $A_i = \D G_0(x^*(p_i))$. We have that 
  \(
    \begin{aligned}
      \D x^*(p_1) - \D x^*(p_2) = A_2^{-1} \left( A_1 - A_2 \right) A_1^{-1},
    \end{aligned}
  \)
  so with Claims (i) and (iii) we conclude
  \[
  \begin{aligned}
    & \|\D x^*(p_1) - \D x^*(p_2)\|_2 \leq \|A_2^{-1}\|_2 \|A_1 - A_2\|_2 \|A_1^{-1}\|_2 \\
    & \leq \frac{4\LDG}{m^2}  \|x^*(p_1) - x^*(p_2)\|_2 
    \leq \frac{8\LDG}{m^3} \|p_1 - p_2\|_2.
  \end{aligned}
  \]
\end{proof}

\section{Supplement on Numerical Examples}
\label{sec:supplementary}
\subsection{Aggregative Game over a Directed Network }
We first give a sufficient condition such that Assumption~\ref{ass:strong_monotonicity} is satisfied.
Observe that 
\begin{equation}
\label{eq:example1_pd}
\begin{aligned}
\Sym(M) & = Q + a\Sym(W) \\
& \succeq \Big(\lambda_{\min}(Q) - a\|\Sym(W)\|_2\Big) \I\succ 0,
\end{aligned}
\end{equation}
so, Assumption~\ref{ass:strong_monotonicity} holds when $a< \lambda_{\min}(Q) / \|\Sym(W)\|_2$. It follows that $M$ is nonsingular, and the linearity of $G_0$ yields a closed-form response map $x^*(p) = -M^{-1}p$. 

Moreover, we verify Assumption~\ref{ass:fast_dynamics} holds for each of the learning dynamics in the example.
In the NE-update learning rule agents utilize $f^{\textrm{NE}}(x,p) = x^*(p)$ in~\eqref{eq:iterations_x}.
Assumption~\ref{ass:fast_dynamics} is satisfied since dynamics $\dot{x} = x^*(p) - x$ are linear and admit the solution
\(x(t) = x^*(p)  + (x(0)-x^*(p))e^{-t}.\)
In the BR-update learning rule each agent minimizes its incentivized cost against the joint strategy of other agents, which yields
 \[f_i^{\textrm{BR}}(x,p) = \argmin_{y_i \in \X_i} c^\gamma(y_i, x_{-i}).\]
  Since $c_i^\gamma(x)$ are strictly convex with respect to $x_i$, each minimizer $x_i\in \interior \X_i$ satisfies the first-order condition $q_ix_i + a \sum_{j=1}^n w_{ij}x_j + p_i = 0$, which can be expressed
\[f^{\textrm{BR}}(x,p) = - Q^{-1}(p+aWx).\]
To verify Assumption~\ref{ass:fast_dynamics}, consider the error $e = x - x^*(p)$. When $\dot{x} = f^{\textrm{BR}}(x,p) - x$, it follows that $\dot{e} = -Q^{-1}Me$. 
Since $Q$ is diagonal and~\eqref{eq:example1_pd} holds, it follows that $\Sym(Q^{-1}M) \succeq m/(2 \lambda_{\max}(Q)) \I \succ 0$.
Therefore, $Q^{-1}M$ is anti-Hurwitz and
\[\begin{aligned}
    \|x(t)-x^*(p)\|_2 
    & \leq \|e(0)\|_2 \|\exp(-Q^{-1}Mt)\|_2  \\
    & \leq \|e(0)\|_2 \exp(- \mu_2(Q^{-1}M)t),
\end{aligned}\]
where $\mu_2(A) = \lambda_{\max}(\Sym(A))$ denotes the logarithmic norm induced by $\|\cdot\|_2$. 
The exponential convergence of $x(t) - x^*(p)$,  uniform over $p \in \P_c$, follows.
\subsection{Coupled Oscillator Game}

We first present a sufficient condition for Assumption~\ref{ass:strong_monotonicity} to hold. The Jacobian of $G_0$ can be computed
\[ \D G_0(x) =
\begin{bmatrix}
    \theta_1 \cos(x_1)\! - \! \cos(\Delta x) & \cos(\Delta x) \\ \cos(\Delta x) & \theta_2 \cos(x_2) \!- \!\cos(\Delta x)
\end{bmatrix},
\]
where $\Delta x = x_1-x_2$. By Gershgorin's circle theorem,
\[\lambda_{\min}(\D G_0(x)) \geq \min_i\! \big\{\theta_i \cos(x_i) - \cos(\Delta x) - |\cos(\Delta x)|\big\}.\]
Since $|\cos(\Delta x)|\leq 1$ and $\cos(x_i)\geq\cos(\pi/3)$
for all $x\in\X$, Assumption~\ref{ass:strong_monotonicity} holds 
whenever $\theta_i > 2/\cos(\pi/3)$ for $i=1,2$.

Regarding the numerical computation of the map $x^*$, observe that $\D G_0(x)$ is symmetric and hence, for every $p$, there exists a $\Psi_p:\X \to \R$ such that $\nabla \Psi_p (x) = G_0(x) + p$ over $\X$ (see~\cite[p.14, Theorem 1.3.1]{facchineiFiniteDimensionalVariationalInequalities2004}). One can  verify that
\[\Psi_p(x) = -\theta_1 \cos(x_1) - \theta_2 \cos(x_2) + \cos(\Delta x) + p^\top x.\]
Therefore, computing the solution $x^*(p)$ to~\eqref{eq:NE_condition} is equivalent to solving the convex program
\(x^*(p) \in\argmin_{x\in \X} \Psi_p(x)\). 

Finally, the example implements the projected-gradient learning dynamics $f^{\textrm{PG}}$, in which
\[f_i^{\textrm{PG}}(x,p) = \Pi_{\X_i} \big( x_i - \eta \frac{\partial c^\gamma}{\partial x_i}(x)\big),\]
where $\eta >0$ is a constant and $\Pi_{\X_i}$ is the projection on $\X_i$. To show that $f^{\textrm{PG}}$ satisfies Assumption~\ref{ass:fast_dynamics}, observe that
the fixed point condition 
$x^*(p) = \Pi_{\X}\big(x^*(p) - \eta G_\gamma(x^*(p))\big)$ 
holds. By the non-expansiveness of $\Pi_{\X}$ and strong monotonicity of $G_\gamma$,
\[
\begin{aligned}
\|f^{\textrm{PG}}(x,p)-x^*(p) \|_2^2 \leq (1-\eta m + \eta^2 L^2) \|x-x^*(p)\|_2^2,
\end{aligned}
\]
where $L = \max_{x\in \X} \|\D G_0(x)\|_2$.
Let $\rho = (1-\eta m + \eta^2 L^2)^{\frac{1}{2}}$. When $\eta \in (0,\frac{m}{L^2})$, it holds that $\rho \in (0,1)$.
Applying Grönwall's inequality on $\frac{d}{dt}\frac{1}{2}\|e\|_2^2 \leq -(1-\rho)\|e\|_2^2$
yields 
the required $\mathcal{KL}$ bound, which is uniform in $\P_c$.

\end{document}